\documentclass[11pt]{article}
\usepackage{amsmath,amsfonts,amssymb,amsthm}
\usepackage{enumerate}
\usepackage[pagebackref,colorlinks,citecolor=blue,linkcolor=blue]{hyperref}

\usepackage{geometry}
\geometry{verbose,tmargin=3cm}

\numberwithin{equation}{section}
\theoremstyle{plain}
\newtheorem{theorem}[equation]{Theorem}

\newtheorem{lemma}[equation]{Lemma}
\newtheorem{proposition}[equation]{Proposition}

\theoremstyle{definition}
\newtheorem{definition}[equation]{Definition}

\newtheorem{example}[equation]{Example}
\newtheorem{remark}[equation]{Remark}

\numberwithin{equation}{section}

\newcommand{\R}{{\mathbb R}}
\newcommand{\N}{{\mathbb N}}

\newcommand{\Om}{\Omega}

\providecommand{\vint}[1]{\mathchoice
          {\mathop{\vrule width 5pt height 3 pt depth -2.5pt
                  \kern -9pt \kern 1pt\intop}\nolimits_{\kern -5pt{#1}}}
          {\mathop{\vrule width 5pt height 3 pt depth -2.6pt
                  \kern -6pt \intop}\nolimits_{\kern -3pt{#1}}}
          {\mathop{\vrule width 5pt height 3 pt depth -2.6pt
                  \kern -6pt \intop}\nolimits_{\kern -3pt{#1}}}
          {\mathop{\vrule width 5pt height 3 pt depth -2.6pt
                  \kern -6pt \intop}\nolimits_{\kern -3pt{#1}}}}

\newcommand{\eps}{\varepsilon}
\newcommand{\loc}{\mathrm{loc}}

\newcommand{\BV}{\mathrm{BV}}

\newcommand{\liploc}{\mathrm{Lip}_{\mathrm{loc}}}

\newcommand{\ch}{\text{\raise 1.3pt \hbox{$\chi$}\kern-0.2pt}}

\DeclareMathOperator{\capa}{Cap}
\DeclareMathOperator{\rcapa}{cap}

\DeclareMathOperator{\Lip}{Lip}

\DeclareMathOperator{\fint}{fine-int}

\begin{document}
\title{A sharp Leibniz rule for  $\BV$ functions
	in metric spaces
\footnote{{\bf 2010 Mathematics Subject Classification}: 30L99, 31E05, 26B30.
\hfill \break {\it Keywords\,}: function of bounded variation, Leibniz rule,
metric measure space, weak* convergence, quasi semicontinuity
}}
\author{Panu Lahti}
\maketitle

\begin{abstract}
We prove a Leibniz rule for $\BV$ functions in a complete metric space
that is equipped with a doubling measure and supports a Poincar\'e
inequality. Unlike in previous versions of the rule, we do not assume
the functions to be locally essentially bounded and the end
result does not involve a constant $C\ge 1$, and so our
result seems to be essentially the best possible.
In order to obtain the rule in such generality, we first study the weak* convergence
of the variation measure of $\BV$ functions, with
quasi semicontinuous test functions.
\end{abstract}

\section{Introduction}

The Leibniz rule for functions of bounded variation ($\BV$ functions)
says that if $u,v\in\BV(\R^n)\cap L^{\infty}(\R^n)$, then the
variation measures satisfy
\begin{equation}\label{eq:Euclidean Leibniz rule}
dD(uv) = \overline{u}\,d Dv+\overline{v}\,d Du,
\end{equation}
where $\overline{u},\overline{v}$ are the so-called precise
representatives of $u$ and $v$; see
\cite{Vol} or \cite[Section 4.6.4]{VH}.
More precisely, this result is proved in the above references with somewhat weaker
assumptions; in particular, the boundedness assumption can be weakened
to only one of the functions being locally (essentially) bounded.

In the past two decades, a theory of $\BV$ functions
as well as other topics in analysis has been developed in the
abstract setting of metric measure spaces. The standard assumptions in this setting
are that $(X,d,\mu)$ is a complete metric space equipped with a Borel regular,
\emph{doubling} outer measure $\mu$, and that $X$ supports a Poincar\'e inequality. See Section \ref{sec:preliminaries} for definitions.
In this setting, the following Leibniz rule for $\BV$ functions was proved in
\cite{KKST3}.

\begin{proposition}[{\cite[Proposition 4.2]{KKST3}}]\label{prop:KKST Leibniz rule}
	Let $u,v\in\BV(X)\cap L^{\infty}(X)$ be nonnegative functions.
	Then $uv\in\BV(X)\cap L^{\infty}(X)$ such that
	\[
	d\Vert D(uv)\Vert\le Cv^{\vee}\, d\Vert Du\Vert
	+Cu^{\vee}\, d\Vert Dv\Vert
	\]
	for some constant $C\ge 1$ that depends only on the doubling constant
	of the measure and the constants in the Poincar\'e inequality.
\end{proposition}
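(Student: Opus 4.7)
My plan is to establish the measure inequality on arbitrary open sets $V\subset X$ and then promote it to Borel sets by outer regularity of the (Radon) variation measures; that $uv\in L^\infty(X)$ is immediate, while $uv\in\BV(X)$ will follow by taking $V=X$ once the inequality is proved. The main tool is approximation by discrete convolutions. Fix a scale $r>0$, take a maximal $r$-separated net $\{x_j\}$ in $X$ with $B_j:=B(x_j,r)$, a subordinate Lipschitz partition of unity $\{\varphi_j\}$ with $\lip\varphi_j\le C/r$ and bounded overlap, and set
\[
u_r:=\sum_j u_{B_j}\varphi_j,\qquad v_r:=\sum_j v_{B_j}\varphi_j,
\]
where $u_{B_j},v_{B_j}$ are integral averages. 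These functions are locally Lipschitz, bounded above by $\|u\|_\infty$ and $\|v\|_\infty$ respectively, and converge $\mu$-a.e.\ and in $L^1_{\loc}(X)$ to $u$ and $v$. A telescoping over overlapping balls combined with the $(1,1)$-Poincar\'e inequality for $\BV$ functions yields the pointwise estimate
\[
\lip u_r(x)\le C\,\frac{\|Du\|(B(x,\lambda r))}{\mu(B(x,r))}
\]
for some $\lambda\ge 1$ depending only on the structural constants, and similarly for $\lip v_r$.

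For locally Lipschitz functions one has the pointwise Leibniz bound $\lip(u_rv_r)\le v_r\lip u_r+u_r\lip v_r$, so the lower semicontinuity of total variation on $V$ applied to $u_rv_r\to uv$ in $L^1_{\loc}(V)$ (using the uniform $L^\infty$ bounds) gives
\[
\|D(uv)\|(V)\le\liminf_{r\to 0}\Bigl(\int_V v_r\lip u_r\,d\mu+\int_V u_r\lip v_r\,d\mu\Bigr).
\]
Substituting the Poincar\'e-based bound on $\lip u_r$ and swapping the order of integration via Fubini,
\[
\int_V v_r\lip u_r\,d\mu\le C\int_{V'}\Bigl(\,\vint{B(y,\lambda r)} v_r\,d\mu\Bigr)d\|Du\|(y),
\]
where $V'\supset V$ is a slight enlargement, with the symmetric bound obtained by swapping $u$ and $v$.

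The crux of the proof is passing to the limit $r\to 0$ on the right-hand side. The inner averages are uniformly bounded by $\|v\|_\infty$, so dominated convergence against $d\|Du\|$ reduces everything to showing
\[
\limsup_{r\to 0}\vint{B(y,\lambda r)} v_r\,d\mu\le v^\vee(y)
\]
for $\|Du\|$-a.e.\ $y$. Since $v_r$ is itself a local average of $v$, this amounts to showing that small-ball averages of a $\BV$ function are asymptotically bounded above by the upper approximate limit $v^\vee$ outside a set of $1$-capacity zero---a standard consequence of the existence of quasi-continuous representatives of $\BV$ functions---together with the fact that $\|Du\|$ charges no set of $1$-capacity zero. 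I expect the careful management of these capacity-negligible exceptional sets, and the justification of the Fubini step in the absence of explicit coordinates, to be the main obstacles; both are handled by bounded-overlap covering arguments. Once the open-set inequality is secured, outer regularity of $\|D(uv)\|,\|Du\|,\|Dv\|$ extends it to arbitrary Borel sets and produces the stated measure inequality, with the constant $C\ge 1$ absorbing the doubling constant, the Poincar\'e constants, and the overlap constants.
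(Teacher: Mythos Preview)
This proposition is not proved in the present paper; it is quoted from \cite{KKST3}, and the paper explicitly remarks (both in the introduction and in the final Remark after Example~\ref{ex:pointwise representatives}) that the proof in \cite{KKST3} proceeds via discrete convolution approximations of $u$. Your outline is precisely that approach and is essentially correct: the Fubini swap is indeed handled by bounded-overlap arguments, and the key limiting step $\limsup_{r\to 0}\vint{B(y,\lambda r)}v_r\,d\mu\le v^{\vee}(y)$ for $\Vert Du\Vert$-a.e.\ $y$ reduces (via doubling) to the Lebesgue-point theorem for $\BV$ functions proved in \cite{KKST3} itself, together with the absolute continuity \eqref{eq:absolute continuity of var measure wrt H}.
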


Note that in metric spaces, one cannot talk about the vector measure
$Du$, only the total variation $\Vert Du\Vert$. In the above Leibniz rule,
we see that again
the functions are assumed to be in $L^{\infty}(X)$.
Additionally, there is a multiplicative constant $C\ge 1$
that arises from the use of a \emph{discrete convolution} technique
in the proof of the Leibniz rule. This is a common technique in metric
space analysis, and sometimes the constant $C$ appearing in
an end result cannot be removed, see e.g.
\cite[Remark 4.7, Example 4.8]{HKLL}.
On the other hand, for the \emph{upper gradients} of
Newton-Sobolev functions (a generalization of Sobolev function to metric spaces),
one has the Leibniz rule
\[
g_{uv}\le ug_v+vg_u,
\]
which does not involve a constant $C$.
Thus it is natural to ask whether the constant $C$, as well as the
$L^{\infty}$-assumption, can be dropped from the $\BV$ Leibniz rule, and in this paper
we show that this is indeed the case. Our main result is the following.
\begin{theorem}\label{thm:Leibniz rule intro}
	Let $\Om\subset X$ be an open set and let $u,v\in L^1_{\loc}(\Om)$.
	Then
	\[
	\Vert D(uv)\Vert(\Om)\le \int_{\Om}|u|^{\vee}\,d\Vert Dv\Vert
	+\int_{\Om}|v|^{\vee}\,d\Vert Du\Vert.
	\]
\end{theorem}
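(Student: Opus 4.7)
My plan is to proceed by a two-step approximation. First I would reduce to bounded $u,v$ via truncation, and then in the bounded case approximate by locally Lipschitz functions (via discrete convolution) for which a pointwise Leibniz rule on the level of upper gradients is available without any constant. The main technical content then lies in passing to the limit, and this is where the weak* convergence of variation measures paired with quasi semicontinuous test functions, developed earlier in the paper, becomes essential.

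For the truncation step, set $u_k=\max(-k,\min(u,k))$ and $v_k=\max(-k,\min(v,k))$. Assuming the right-hand side of the inequality is finite (otherwise there is nothing to prove), the chain rule for $\BV$ functions gives $\Vert Du_k\Vert\le\Vert Du\Vert$ and $\Vert Dv_k\Vert\le\Vert Dv\Vert$, while $u_kv_k\to uv$ in $L^1_{\loc}(\Om)$; lower semicontinuity of total variation then yields $\Vert D(uv)\Vert(\Om)\le\liminf_k\Vert D(u_kv_k)\Vert(\Om)$. On the right-hand side, $|u_k|^\vee\nearrow|u|^\vee$ and similarly for $v$, and the truncated variation measures agree with the untruncated ones on the sets where the functions have not been cut off. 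So once the rule is proved for bounded $u,v$, monotone convergence closes the reduction.

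Assume now $u,v\in L^\infty(\Om)$. Discrete convolution at scale $1/i$ produces $u_i,v_i\in\liploc(\Om)$ converging in $L^1_{\loc}(\Om)$ to $u,v$, still bounded by $\Vert u\Vert_\infty$ and $\Vert v\Vert_\infty$, and equipped with upper gradients $g_{u_i},g_{v_i}$ such that the measures $g_{u_i}\mu$ and $g_{v_i}\mu$ converge weakly* in $\Om$ to $\Vert Du\Vert$ and $\Vert Dv\Vert$ respectively. For such locally Lipschitz functions one has the elementary product rule $g_{u_iv_i}\le|u_i|g_{v_i}+|v_i|g_{u_i}$, whence
\[
\Vert D(u_iv_i)\Vert(\Om)\le\int_\Om|u_i|g_{v_i}\,d\mu+\int_\Om|v_i|g_{u_i}\,d\mu.
\]
Lower semicontinuity gives $\Vert D(uv)\Vert(\Om)\le\liminf_i\Vert D(u_iv_i)\Vert(\Om)$, so it remains to prove the asymmetric bound
\[
\limsup_i\int_\Om|u_i|\,g_{v_i}\,d\mu\le\int_\Om|u|^\vee\,d\Vert Dv\Vert
\]
and its companion.

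The main obstacle is precisely this last step: both the integrand $|u_i|$ and the measure $g_{v_i}\mu$ depend on $i$, so a naive application of weak* convergence is not enough. The remedy, and the reason for the preliminary section on weak* convergence against quasi semicontinuous test functions, is that $|u|^\vee$ is quasi upper semicontinuous and the discrete convolutions $u_i$ can be arranged so that $\limsup_i|u_i|(x)\le|u|^\vee(x)$ outside a set of zero $\BV$-capacity. The weak* convergence result of the paper then upgrades this pointwise control into the desired integral bound, incurring no multiplicative constant, which is exactly the improvement over Proposition~\ref{prop:KKST Leibniz rule}.
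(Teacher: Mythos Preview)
Your central claim---that discrete convolutions $u_i,v_i$ have upper gradients with $g_{u_i}\mu\overset{*}{\rightharpoonup}\Vert Du\Vert$ and $g_{v_i}\mu\overset{*}{\rightharpoonup}\Vert Dv\Vert$---is false, and this is precisely the obstacle the paper is built around. Discrete convolution only gives $\limsup_i\int_\Omega g_{u_i}\,d\mu\le C\Vert Du\Vert(\Omega)$ with a structural constant $C=C(C_d,C_P,\lambda)>1$; see the paper's final Remark. To get a sequence with $\int_\Omega g_{u_i}\,d\mu\to\Vert Du\Vert(\Omega)$ one must take an optimal sequence from the definition of the total variation, for which no explicit formula (and in particular no a~priori pointwise bound $\limsup_i|u_i|\le|u|^\vee$) is available. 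The paper recovers that pointwise bound through Theorem~\ref{thm:strict to pointwise convergence}, which needs exactly the strict convergence $\Vert Du_i\Vert(\Omega)\to\Vert Du\Vert(\Omega)$.

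More importantly, your symmetric scheme forces you to pass to the limit in $\int_\Omega|u_i|\,g_{v_i}\,d\mu$ with both factors varying, and the weak* result of Section~\ref{sec:weak convergence} does not do this: Proposition~\ref{prop:weak convergence with quasisemicontinuous test function} requires a \emph{fixed} quasi upper semicontinuous test function $\eta$, and replacing $|u_i|$ by $|u|^\vee$ incurs an error $\int_\Omega(|u_i|-|u|^\vee)_+g_{v_i}\,d\mu$ that you cannot control, since $(|u_i|-|u|^\vee)_+\to 0$ only pointwise (not uniformly) and $g_{v_i}$ is unbounded. The paper sidesteps all of this by an \emph{asymmetric} approximation: approximate only $u$ by an optimal sequence $(u_i)$, keep $v$ as a $\BV$ function, and apply Lemma~\ref{lem:Leibniz rule for BV consequence} to obtain $\Vert D(u_iv)\Vert(\Omega)\le\int_\Omega|u_i|\,d\Vert Dv\Vert+\int_\Omega|v|^\vee g_{u_i}\,d\mu$. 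Now each term has only one $i$-dependent factor, and the two tools (Theorem~\ref{thm:strict to pointwise convergence} for the first, Proposition~\ref{prop:weak convergence with quasisemicontinuous test function} for the second) handle them cleanly. Two smaller gaps: your truncation step asserts $u_kv_k\to uv$ in $L^1_{\loc}(\Omega)$, but $uv\in L^1_{\loc}(\Omega)$ is not assumed and must be \emph{derived} (the paper uses the pointwise lower semicontinuity of Proposition~\ref{prop:lower semicontinuity pointwise}); and you have not addressed the case where $\Vert Du\Vert(\Omega)$ or $\Vert Dv\Vert(\Omega)$ is infinite while the right-hand side is finite, which the paper treats via the sets $A_k$ and Theorem~\ref{thm:characterization of BV0}.
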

Since neither of the functions $u,v$ is assumed to be
in $L^{\infty}_{\loc}(\Om)$, we do not automatically even have
$uv\in L^1_{\loc}(\Om)$,
but we are able to prove this assuming that the right-hand side is finite,
and then we also obtain $\Vert D(uv)\Vert(\Om)<\infty$.
Moreover, we do not assume the functions $u,v$ to be $\BV$ functions
even locally, so the measures $\Vert Du\Vert,\Vert Dv\Vert$ could be large;
it is only necessary that the two integrals are finite.
If this is the case, then we can obtain
\begin{equation}\label{eq:Leibniz rule measure version}
d\Vert D(uv)\Vert\le |u|^{\vee}\, d\Vert Dv\Vert
+|v|^{\vee}\, d\Vert Du\Vert,
\end{equation}
as measures on $\Om$,
improving on Proposition \ref{prop:KKST Leibniz rule} --- see Remark
\ref{rmk:Leibniz rule}.
In fact, due to our minimal assumptions, our result seems to give a slight improvement
on what is known even in Euclidean spaces.
On the other hand, in Example \ref{ex:pointwise representatives} we show
that unlike in Euclidean spaces, in metric spaces it is not possible
to replace the representatives $u^{\vee},v^{\vee}$ by $\overline{u},\overline{v}$,
and that equality may hold in \eqref{eq:Leibniz rule measure version}.
Thus our Leibniz rule appears to be
essentially the best possible
in every respect.

It can be said that the
Leibniz rules for BV functions that are found in the literature are already quite sufficient for
most applications, typically in the calculus of variations.
Thus perhaps the main interest of this paper is in the methods
that we employ.
Indeed, to prove the Leibniz rule in the above generality, we use
several rather strong tools and also develop a few new ones.
To avoid having to assume that $u,v$ are $\BV$ functions,
we use an extension property that relies on Federer's characterization
of sets of finite perimeter proved in \cite{L-Fed}.
The most significant effort is required in ensuring
that the constant $C$ does not appear in the end result.
For this, we use two tools: one is a result on the pointwise
convergence of $\BV$ functions given in \cite{L-SP}.
The other is a result on the weak* convergence of variation measures
in the case of \emph{quasi semicontinuous} test functions,
which we derive in Section \ref{sec:weak convergence} and which
is based on results in \cite{L-LSC}.

\section{Preliminaries}\label{sec:preliminaries}

In this section we present the necessary notation, definitions,
assumptions, and a few background results. 

Throughout this paper, $(X,d,\mu)$ is a complete metric space that is equip\-ped
with a metric $d$ and a Borel regular outer measure $\mu$ satisfying
a doubling property, meaning that
there exists a constant $C_d\ge 1$ such that
\[
0<\mu(B(x,2r))\le C_d\mu(B(x,r))<\infty
\]
for every ball $B(x,r):=\{y\in X:\,d(y,x)<r\}$.
When a property holds outside a set of $\mu$-measure zero, we say that it holds
almost everywhere, abbreviated a.e.

As a complete metric space equipped with a doubling measure, $X$ is proper,
that is, closed and bounded sets are compact.
All functions defined on $X$ or its subsets will take values in $[-\infty,\infty]$.
Given a $\mu$-measurable set $A\subset X$, we define $L^1_{\loc}(A)$ as the class
of functions $u$ on $A$
such that for every $x\in A$ there exists $r>0$ such that $u\in L^1(A\cap B(x,r))$.
Other local spaces of functions are defined analogously.
For an open set $\Omega\subset X$,
a function is in the class $L^1_{\loc}(\Omega)$ if and only if it is in $L^1(\Om')$ for
every open $\Omega'\Subset\Omega$.
Here $\Omega'\Subset\Omega$ means that $\overline{\Omega'}$ is a
compact subset of $\Omega$.

For any set $A\subset X$ and $0<R<\infty$, the restricted Hausdorff content
of codimension one is defined as
\[
\mathcal{H}_{R}(A):=\inf\left\{ \sum_{i=1}^{\infty}
\frac{\mu(B(x_{i},r_{i}))}{r_{i}}:\,A\subset\bigcup_{i=1}^{\infty}B(x_{i},r_{i}),\,r_{i}\le R\right\}.
\]
The codimension one Hausdorff measure of $A\subset X$ is then defined as
\[
\mathcal{H}(A):=\lim_{R\rightarrow 0}\mathcal{H}_{R}(A).
\]

By a curve we mean a nonconstant rectifiable continuous mapping from a compact interval of the real line
into $X$.
A nonnegative Borel function $g$ on $X$ is an upper gradient 
of a function $u$
on $X$ if for all curves $\gamma$, we have
\begin{equation}\label{eq:definition of upper gradient}
|u(x)-u(y)|\le \int_\gamma g\,ds,
\end{equation}
where $x$ and $y$ are the end points of $\gamma$
and the curve integral is defined by using an arc-length parametrization,
see \cite[Section 2]{HK} where upper gradients were originally introduced.
We interpret $|u(x)-u(y)|=\infty$ whenever  
at least one of $|u(x)|$, $|u(y)|$ is infinite.

We say that a family of curves $\Gamma$ is of zero $1$-modulus if there is a 
nonnegative Borel function $\rho\in L^1(X)$ such that 
for all curves $\gamma\in\Gamma$, the curve integral $\int_\gamma \rho\,ds$ is infinite.
A property is said to hold for $1$-almost every curve
if it fails only for a curve family with zero $1$-modulus. 
If $g$ is a nonnegative $\mu$-measurable function on $X$
and (\ref{eq:definition of upper gradient}) holds for $1$-almost every curve,
we say that $g$ is a $1$-weak upper gradient of $u$. 
By only considering curves $\gamma$ in $A\subset X$,
we can talk about a function $g$ being a ($1$-weak) upper gradient of $u$ in $A$.

Given a $\mu$-measurable set $H\subset X$, we let
\[
\Vert u\Vert_{N^{1,1}(H)}:=\Vert u\Vert_{L^1(H)}+\inf \Vert g\Vert_{L^1(H)},
\]
where the infimum is taken over all $1$-weak upper gradients $g$ of $u$ in $H$.
The substitute for the Sobolev space $W^{1,1}$ in the metric setting is the Newton-Sobolev space
\[
N^{1,1}(H):=\{u:\|u\|_{N^{1,1}(H)}<\infty\},
\]
which was first introduced in \cite{S}.
We understand a Newton-Sobolev function to be defined at every $x\in H$
(even though $\Vert \cdot\Vert_{N^{1,1}(H)}$ is then only a seminorm).
It is known that for any $u\in N_{\loc}^{1,1}(H)$ there exists a minimal $1$-weak
upper gradient of $u$ in $H$, always denoted by $g_{u}$, satisfying $g_{u}\le g$ 
a.e. in $H$, for any $1$-weak upper gradient $g\in L_{\loc}^{1}(H)$
of $u$ in $H$, see \cite[Theorem 2.25]{BB}.

We will assume throughout the paper that $X$ supports a $(1,1)$-Poincar\'e inequality,
meaning that there exist constants $C_P>0$ and $\lambda \ge 1$ such that for every
ball $B(x,r)$, every $u\in L^1_{\loc}(X)$,
and every upper gradient $g$ of $u$,
we have
\[
\vint{B(x,r)}|u-u_{B(x,r)}|\, d\mu 
\le C_P r\vint{B(x,\lambda r)}g\,d\mu,
\]
where 
\[
u_{B(x,r)}:=\vint{B(x,r)}u\,d\mu :=\frac 1{\mu(B(x,r))}\int_{B(x,r)}u\,d\mu.
\]

The $1$-capacity of a set $A\subset X$ is defined as
\[
\capa_1(A):=\inf \Vert u\Vert_{N^{1,1}(X)},
\]
where the infimum is taken over all functions $u\in N^{1,1}(X)$ such that $u\ge 1$ in $A$.

\begin{definition}
Let $H\subset X$.
	We say that a set $A\subset H$ is $1$-quasiopen with respect to $H$
	if for every $\eps>0$ there is an
	open set $G\subset X$ such that $\capa_1(G)<\eps$ and $(A\cup G)\cap H$ is open
	in the subspace topology of $H$.
	When $H=X$, we omit mention of it.

We say that a function $u$ is $1$-quasi (lower/upper semi-)continuous on $H$ if for every $\eps>0$ there exists an open set $G\subset X$ such that $\capa_1(G)<\eps$
and $u|_{H\setminus G}$ is real-valued (lower/upper semi-)continuous.
\end{definition}

It is a well-known fact that a Newton-Sobolev function $u\in N^{1,1}(\Om)$ is
$1$-quasicontinuous on an open set $\Om$,
see \cite[Theorem 1.1]{BBS} or \cite[Theorem 5.29]{BB}.

The variational $1$-capacity of a set $A\subset D$ with respect to a set $D\subset X$ is
defined as
\[
\rcapa_1(A,D):=\inf \int_{X} g_u\,d\mu,
\]
where the infimum is taken over functions $u\in N^{1,1}(X)$ such that $u\ge 1$ on $A$
and $u=0$ on $X\setminus D$.

Next we present the basic theory of functions
of bounded variation on metric spaces. This was first developed in
\cite{A1, M}; see also the monographs \cite{AFP, EvaG92, Fed, Giu84, Zie89} for the classical 
theory in Euclidean spaces.
We will always denote by $\Om$ an open subset of $X$.
Given a function $u\in L^1_{\loc}(\Om)$,
we define the total variation of $u$ in $\Om$ as
\[
\|Du\|(\Om):=\inf\left\{\liminf_{i\to\infty}\int_\Om g_{u_i}\,d\mu:\, u_i\in 
\liploc(\Om),\, u_i\to u\textrm{ in } L^1_{\loc}(\Om)\right\},
\]
where each $g_{u_i}$ is the minimal $1$-weak upper gradient of $u_i$
in $\Om$. If $u\notin L^1_{\loc}(\Om)$, we interpret $\Vert Du\Vert(\Om)=\infty$.
(In \cite{M}, local Lipschitz constants were used in place of upper gradients, but the theory
can be developed similarly with either definition.)
We say that a function $u\in L^1(\Om)$ is of bounded variation, 
and denote $u\in\BV(\Om)$, if $\|Du\|(\Om)<\infty$.
For an arbitrary set $A\subset X$, we define
\[
\|Du\|(A):=\inf\{\|Du\|(W):\, A\subset W,\,W\subset X
\text{ is open}\}.
\]
In general, we understand the expression $\Vert Du\Vert(A)<\infty$ to mean that
there exists some open set $\Om\supset A$ such that $u$ is defined on $\Om$ with $u\in L^1_{\loc}(\Om)$
and $\Vert Du\Vert(\Om)<\infty$.
\begin{theorem}[{\cite[Theorem 3.4]{M}}]\label{thm:variation measure property}
If $u\in L^1_{\loc}(\Om)$, then $\|Du\|(\cdot)$ is
a Borel measure on $\Omega$.
\end{theorem}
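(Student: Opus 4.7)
The plan is to invoke the De Giorgi-Letta criterion for Borel measures: it suffices to verify that the set function $W\mapsto\Vert Du\Vert(W)$, defined on open subsets of $\Om$, is monotone, additive on disjoint open pairs, and countably subadditive under open covers. Since $\Vert Du\Vert$ is already extended to arbitrary subsets of $X$ as the outer-regular envelope of its values on open sets, these three properties will automatically promote it to a Borel (outer) measure on $\Om$.

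Monotonicity and additivity on disjoint opens are essentially formal. For $\Om_1\subset\Om_2\subset\Om$ open, restricting any approximating sequence $u_i\in\liploc(\Om_2)$ with $u_i\to u$ in $L^1_{\loc}(\Om_2)$ to $\Om_1$ gives $u_i|_{\Om_1}\in\liploc(\Om_1)$ with the restriction of $g_{u_i}$ still serving as an upper gradient, so $\Vert Du\Vert(\Om_1)\le\Vert Du\Vert(\Om_2)$. For disjoint open $\Om_1,\Om_2$, any joint approximating sequence on $\Om_1\cup\Om_2$ splits its energy between the two pieces, and $\liminf$-subadditivity yields $\Vert Du\Vert(\Om_1\cup\Om_2)\ge\Vert Du\Vert(\Om_1)+\Vert Du\Vert(\Om_2)$. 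Conversely, approximating sequences chosen separately on each piece paste to a function in $\liploc(\Om_1\cup\Om_2)$, since every point has a neighborhood contained in one of the two disjoint pieces; this gives the reverse inequality.

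The main effort is countable subadditivity: $\Vert Du\Vert(\bigcup_j\Om_j)\le\sum_j\Vert Du\Vert(\Om_j)$ for any countable open cover. I would pass to a locally finite refinement $\{V_k\}$ with $V_k\Subset\Om_{j(k)}$, fix a Lipschitz partition of unity $\{\eta_k\}$ subordinate to it, and select $u_i^{(k)}\in\liploc(\Om_{j(k)})$ with $u_i^{(k)}\to u$ in $L^1_{\loc}(\Om_{j(k)})$ nearly achieving $\Vert Du\Vert(\Om_{j(k)})$. The candidates $v_i:=\sum_k\eta_k u_i^{(k)}$ then lie in $\liploc(\Om)$ and converge to $u$ in $L^1_{\loc}(\Om)$. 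Using the identity $\sum_k\eta_k\equiv 1$ and the Leibniz rule for upper gradients of Lipschitz functions, one obtains a pointwise estimate of the form $g_{v_i}\le\sum_k\eta_k g_{u_i^{(k)}}$ modulo additional terms of shape $\Lip(\eta_k)\,|u_i^{(k)}-u_i^{(l)}|$ on overlaps $V_k\cap V_l$. These cross terms tend to zero in $L^1_{\loc}$ as $i\to\infty$ because all $u_i^{(k)}$ share the same limit $u$, while the main terms sum to at most $\sum_j\Vert Du\Vert(\Om_j)$ up to arbitrarily small error. The hardest point is precisely the control of these cross terms when $u$ is only in $L^1_{\loc}$ and the pointwise Lipschitz constants of the $\eta_k$ may be large where the refinement is fine; this forces a Whitney-type choice of refinement balancing the Lipschitz constants against the size of the overlaps. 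Once countable subadditivity is established, the De Giorgi-Letta criterion delivers the Borel measure asserted by the theorem.
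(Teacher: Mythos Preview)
The paper does not give its own proof of this statement; it is quoted from \cite[Theorem~3.4]{M} and only the remark follows that finiteness of $\Vert Du\Vert(\Om)$ is not needed. Your overall strategy---verify the De Giorgi--Letta hypotheses for the set function $W\mapsto\Vert Du\Vert(W)$ on open $W\subset\Om$---is precisely the route taken in \cite{M}.

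There is, however, a misstatement of the criterion. De Giorgi--Letta requires, besides monotonicity and superadditivity on disjoint opens, \emph{finite} subadditivity together with \emph{inner regularity}
\[
\Vert Du\Vert(W)=\sup\{\Vert Du\Vert(W'):\ W'\Subset W\ \textrm{open}\},
\]
not countable subadditivity. The three conditions you list do not by themselves force Borel sets to be Carath\'eodory-measurable for the outer-regular envelope, so as stated your plan has a gap. The partition-of-unity gluing you sketch is nonetheless the right technical engine, but it should be aimed at the finite ``joining'' inequality $\Vert Du\Vert(W_1'\cup W_2)\le\Vert Du\Vert(W_1)+\Vert Du\Vert(W_2)$ for $W_1'\Subset W_1$: here a \emph{single} Lipschitz cutoff $\eta$ separating $W_1'$ from $X\setminus W_1$ suffices, its Lipschitz constant is a fixed number of order $1/\dist(W_1',\partial W_1)$, and the cross term $g_\eta\,|u_i^{(1)}-u_i^{(2)}|$ vanishes in $L^1$ on the overlap since both sequences converge to $u$ there. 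Both finite subadditivity and inner regularity then follow from this joining inequality (the latter by exhausting $W$ by $W_1\Subset W_2\Subset\cdots$ and telescoping). Organising the argument this way also dissolves the difficulty you flag about balancing Lipschitz constants of a countable Whitney partition against overlap sizes: that balancing act is an artefact of going after countable subadditivity directly and is simply not needed.
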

Note that this result does not require that $\Vert Du\Vert(\Om)<\infty$, as can
be seen from the proof in \cite{M}. We call $\Vert Du\Vert$ the variation measure
of $u$.
A $\mu$-measurable set $E\subset X$ is said to be of finite perimeter if $\|D\ch_E\|(X)<\infty$, where $\ch_E$ is the characteristic function of $E$.
The perimeter of $E$ in a set $A\subset X$ is also denoted by
\[
P(E,A):=\|D\ch_E\|(A).
\]

The measure-theoretic boundary $\partial^{*}E$ of a set $E\subset X$
is defined as the set of points
$x\in X$
at which both $E$ and its complement have strictly positive upper density, i.e.
\[
\limsup_{r\to 0}\frac{\mu(B(x,r)\cap E)}{\mu(B(x,r))}>0\quad
\textrm{and}\quad\limsup_{r\to 0}\frac{\mu(B(x,r)\setminus E)}{\mu(B(x,r))}>0.
\]
For an open set $\Omega\subset X$ and a $\mu$-measurable set $E\subset X$ with $P(E,\Omega)<\infty$, we know that for any Borel set $A\subset\Omega$,
\begin{equation}\label{eq:def of theta}
P(E,A)=\int_{\partial^{*}E\cap A}\theta_E\,d\mathcal H,
\end{equation}
where
$\theta_E\colon X\to [\alpha,C_d]$ with $\alpha=\alpha(C_d,C_P,\lambda)>0$, see \cite[Theorem 5.3]{A1} 
and \cite[Theorem 4.6]{AMP}.
The following coarea formula is given in \cite[Proposition 4.2]{M}:
if $u\in L^1_{\loc}(\Omega)$, then
\begin{equation}\label{eq:coarea}
\|Du\|(\Om)=\int_{-\infty}^{\infty}P(\{u>t\},\Om)\,dt.
\end{equation}
If $\Vert Du\Vert(\Om)<\infty$, then the formula holds with
$\Om$ replaced by any Borel set $A\subset \Om$.
From this combined with \eqref{eq:def of theta},
we obtain the absolute continuity
\begin{equation}\label{eq:absolute continuity of var measure wrt H}
\Vert Du\Vert\ll \mathcal H\quad\textrm{on }\Om.
\end{equation}
Since $\liploc(\Om)$ is dense in $N^{1,1}(\Om)$, see \cite[Theorem 5.47]{BB}, it follows that
\begin{equation}\label{eq:Sobolev subclass BV}
N^{1,1}(\Om)\subset \BV(\Om)\quad \textrm{with}\quad \Vert Du\Vert(\Om)
\le \int_\Om g_u\,d\mu\ \ 
\textrm{for every }u\in N^{1,1}(\Om).
\end{equation}

If we apply the $(1,1)$-Poincar\'e inequality to sequences
of approximating locally
Lipschitz functions in the definition of the total variation, we get
the following $\BV$ version:
for every ball $B(x,r)$ and every 
$u\in L^1_{\loc}(X)$, we have
\begin{equation}\label{eq:Poincare for BV}
\vint{B(x,r)}|u-u_{B(x,r)}|\,d\mu
\le C_P r\, \frac{\Vert Du\Vert (B(x,\lambda r))}{\mu(B(x,\lambda r))}.
\end{equation}

The lower and upper approximate limits of a function $u$ on $X$ are defined respectively by
\[
u^{\wedge}(x):
=\sup\left\{t\in\R:\,\lim_{r\to 0}\frac{\mu(B(x,r)\cap\{u<t\})}{\mu(B(x,r))}=0\right\}
\]
and
\[
u^{\vee}(x):
=\inf\left\{t\in\R:\,\lim_{r\to 0}\frac{\mu(B(x,r)\cap\{u>t\})}{\mu(B(x,r))}=0\right\}.
\]
It is straightforward to check that these are always Borel functions.
Unlike Newton-Sobolev functions, we understand $\BV$ functions to be
$\mu$-equivalence classes. To study fine properties, we need to
consider the pointwise representatives $u^{\wedge}$ and $u^{\vee}$.

Recall that Newton-Sobolev functions are quasicontinuous;
$\BV$ functions have the following partially analogous quasi-semicontinuity property, which was first proved in the
Euclidean setting in \cite[Theorem 2.5]{CDLP}.

\begin{proposition}\label{prop:quasisemicontinuity}
	Let  $u\in L^1_{\loc}(\Om)$ with
	$\Vert Du\Vert(\Om)<\infty$. Then
	$u^{\wedge}$ is $1$-quasi lower semicontinuous on $\Om$ and
	$u^{\vee}$ is $1$-quasi upper semicontinuous on $\Om$.
\end{proposition}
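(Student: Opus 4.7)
The plan is to establish quasi lower semicontinuity of $u^{\wedge}$; the claim for $u^{\vee}$ then follows by applying the same result to $-u$, using $u^{\vee}=-(-u)^{\wedge}$. The key reduction I would use is the pointwise identity
\[
\{x\in\Om:u^{\wedge}(x)>t\}=\bigcup_{s\in Q',\,s>t} I_{\{u>s\}},\qquad t\in\R,
\]
where $I_E:=\{x\in X:\lim_{r\to 0}\mu(B(x,r)\cap E)/\mu(B(x,r))=1\}$ is the measure-theoretic interior of $E$, and $Q'\subset\R$ is a countable dense set of levels $s$ with $P(\{u>s\},\Om)<\infty$ (such a set exists by the coarea formula \eqref{eq:coarea}, which forces finite perimeter of superlevel sets at almost every level). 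The $\supset$ direction is immediate; for $\subset$, given $u^{\wedge}(x)>t$ I would pick $s,s^{*}\in Q'$ with $t<s<s^{*}<u^{\wedge}(x)$ by density of $Q'$, so that $\{u\le s\}\subset\{u<s^{*}\}$ has density zero at $x$ and hence $x\in I_{\{u>s\}}$.

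The main auxiliary ingredient is the fact that for every $\mu$-measurable $E\subset X$ with $P(E,\Om)<\infty$ the set $I_E\cap\Om$ is $1$-quasiopen with respect to $\Om$; this is established in \cite{L-LSC}, and rests on a capacitary estimate for $I_E$ in terms of the perimeter, itself a consequence of the Poincar\'e inequality. Granted this, fix $\eps>0$ and enumerate $Q'=\{s_1,s_2,\dots\}$. For each $k\ge 1$ I would choose an open set $G_k\subset X$ with $\capa_1(G_k)<\eps\,2^{-k-1}$ such that $(I_{\{u>s_k\}}\cup G_k)\cap\Om$ is open in $\Om$, and additionally cover $\{x\in\Om:u^{\wedge}(x)=-\infty\}$, which is $\cH$-null by standard $\BV$ fine-property arguments combined with \eqref{eq:absolute continuity of var measure wrt H} and hence $\capa_1$-null, by an open $G_0$ with $\capa_1(G_0)<\eps/2$. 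Setting $G:=\bigcup_{k\ge 0}G_k$, subadditivity yields $\capa_1(G)<\eps$, and for every $t\in\R$ the set
\[
\{u^{\wedge}>t\}\cap(\Om\setminus G)=\bigcup_{s_k>t}\bigl[(I_{\{u>s_k\}}\cup G_k)\cap\Om\bigr]\cap(\Om\setminus G)
\]
is a union of relatively open subsets of $\Om\setminus G$, whence $u^{\wedge}|_{\Om\setminus G}$ is real-valued and lower semicontinuous.

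The hard part is the $1$-quasiopenness of $I_E$ for sets of finite perimeter, which I would borrow from \cite{L-LSC}; once that is in hand, the rest is a routine coarea plus countable-subadditivity argument. A minor technical point is the exclusion of $\{u^{\wedge}=-\infty\}$ to ensure real-valuedness on $\Om\setminus G$, which is absorbed harmlessly into the set $G_0$.
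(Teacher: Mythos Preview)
Your argument is correct in outline and in fact supplies far more detail than the paper, whose proof is a one-line citation to \cite[Corollary 4.2]{L-SA} (itself resting on \cite[Theorem 1.1]{LaSh}). What you sketch---coarea to get finite-perimeter superlevel sets at a dense set of levels, the identity $\{u^{\wedge}>t\}=\bigcup_{s>t}I_{\{u>s\}}$, $1$-quasiopenness of the measure-theoretic interior $I_E$ for sets of finite perimeter, and a countable subadditivity assembly---is essentially the mechanism behind those cited results, so the approaches coincide.

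Two small points to tighten. First, the definition in this paper of $1$-quasi lower semicontinuity requires $u^{\wedge}|_{\Om\setminus G}$ to be \emph{real-valued}; you absorb $\{u^{\wedge}=-\infty\}$ into $G_0$, but you should do the same for $\{u^{\wedge}=+\infty\}$ (also $\mathcal H$-negligible by standard $\BV$ fine properties). Second, the step ``$\mathcal H$-null $\Rightarrow$ $\capa_1$-null'' is true in this setting but is not a consequence of \eqref{eq:absolute continuity of var measure wrt H}, which only says $\Vert Du\Vert\ll\mathcal H$; you need the known equivalence $\capa_1(A)=0\iff\mathcal H(A)=0$ under doubling and Poincar\'e (see e.g.\ the Hakkarainen--Kinnunen circle of results, or \cite{LaSh}). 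Also, the $1$-quasiopenness of $I_E\cap\Om$ for $P(E,\Om)<\infty$ is more directly found in \cite{LaSh} or \cite{L-SA} than in \cite{L-LSC}.
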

\begin{proof}
	This follows from \cite[Corollary 4.2]{L-SA} (which is based on
	\cite[Theorem 1.1]{LaSh}).
\end{proof}

For $D\subset\Om\subset X$, with $\Om$ again open, we define the class of $\BV$ functions with zero boundary values as
\begin{equation}\label{eq:definition of BV zero}
\BV_0(D,\Om):=
\left\{u|_{D}:\,u\in\BV(\Om),\ u^{\wedge}(x)=u^{\vee}(x)=0\textrm{ for }\mathcal H\textrm{-a.e. }x\in \Om\setminus D\right\}.
\end{equation}
This class was previously considered in \cite{L-ZB}.
It follows rather easily from the coarea formula \eqref{eq:coarea} that
for $u\in\BV_0(D,\Om)$, defining $u=0$ (a.e.) on $\Om\setminus D$, we have
\begin{equation}\label{eq:energy of BV zero class}
\Vert Du\Vert(\Om\setminus D)=0,
\end{equation}
see \cite[Proposition 3.14]{L-ZB}.

Next we define the fine topology in the case $p=1$.
\begin{definition}\label{def:1 fine topology}
We say that $A\subset X$ is $1$-thin at the point $x\in X$ if
\[
\lim_{r\to 0}r\frac{\rcapa_1(A\cap B(x,r),B(x,2r))}{\mu(B(x,r))}=0.
\]
We also say that a set $U\subset X$ is $1$-finely open if $X\setminus U$ is $1$-thin at every $x\in U$. Then we define the $1$-fine topology as the collection of $1$-finely open sets on $X$.

We denote the $1$-fine interior of a set $H\subset X$, i.e. the largest $1$-finely open set contained in $H$, by $\fint H$. We denote the $1$-fine closure of $H$,
i.e. the smallest $1$-finely closed set containing $H$, by $\overline{H}^1$.
\end{definition}

See \cite[Section 4]{L-FC} for a proof of the fact that the
$1$-fine topology is indeed a topology.
The following fact is given in \cite[Proposition 3.3]{L-Fed}:
\begin{equation}\label{eq:capacity of fine closure}
\capa_1(\overline{A}^1)=\capa_1(A)\quad\textrm{for any }A\subset X.
\end{equation}

\begin{theorem}[{\cite[Corollary 6.12]{L-CK}}]\label{thm:finely open is quasiopen and vice versa}
A set $U\subset X$ is $1$-quasiopen if and only if it is the union of a $1$-finely
open set and a $\mathcal H$-negligible set.
\end{theorem}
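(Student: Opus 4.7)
My plan is to establish both directions through the canonical candidate $V:=\fint U$. For the forward direction, assume $U$ is $1$-quasiopen and pick, for each $n\in\N$, an open $G_n\subset X$ with $\capa_1(G_n)<2^{-n}$ such that $U\cup G_n$ is open. The set $V$ is $1$-finely open by definition, so it suffices to show $\mathcal H(U\setminus V)=0$. If $x\in U\setminus V$, then $X\setminus U$ is not $1$-thin at $x$. On the other hand $X\setminus(U\cup G_n)$ is closed and does not contain $x\in U\cup G_n$, so it is trivially $1$-thin at $x$. From $X\setminus U\subset(X\setminus(U\cup G_n))\cup G_n$ and subadditivity of $\rcapa_1$ in its first argument I would conclude that $G_n$ fails $1$-thinness at $x$, i.e.\ $x$ lies in the $1$-base $b(G_n):=\{y\in X:G_n\text{ is not $1$-thin at }y\}$. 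Hence $U\setminus V\subset b(G_n)$ for every $n$.

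The key quantitative step is a Kellogg-type estimate
\[
\mathcal H(b(G))\le C\,\capa_1(G)\quad\text{for every open }G\subset X,
\]
which I would derive by a Vitali-type covering of $b(G)$: at each $x\in b(G)$ pick $r_x>0$ so small that $r_x\rcapa_1(G\cap B(x,r_x),B(x,2r_x))\ge c\,\mu(B(x,r_x))$, extract from the family $\{B(x,r_x)\}$ a disjoint subfamily $\{B(x_i,r_i)\}$ whose $5$-fold dilates cover $b(G)$, and use that near-optimal Newtonian test functions for the individual variational capacities $\rcapa_1(G\cap B(x_i,r_i),B(x_i,2r_i))$, having essentially disjoint supports by doubling, sum to an admissible competitor for $\capa_1(G)$. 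This gives $\sum_i \mu(B(x_i,r_i))/r_i\le C\,\capa_1(G)$, which bounds $\mathcal H_{5r_0}(b(G))$ independently of the localization radius $r_0$. Applying the estimate with $G=G_n$ yields $\mathcal H(b(G_n))<C'2^{-n}$, and since $U\setminus V\subset b(G_n)$ for every $n$ we obtain $\mathcal H(U\setminus V)=0$.

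For the reverse direction, write $U=V\cup N$ with $V$ $1$-finely open and $\mathcal H(N)=0$. Given $\eps>0$, cover $N$ by balls $B(x_i,r_i)$ with $\sum_i\mu(B(x_i,r_i))/r_i<c\eps$; the Lipschitz bumps $\max(0,1-\dist(\cdot,B(x_i,r_i))/r_i)$ individually have $N^{1,1}$-norm $\le C\mu(B(x_i,r_i))/r_i$, so the open union $G_N$ of the doubled balls satisfies $\capa_1(G_N)<\eps/2$. To deal with $V$, I would show that every $1$-finely open set is $1$-quasiopen by a symmetric construction: at each $x\in V$, the $1$-thinness of $X\setminus V$ provides Lipschitz cutoffs on nested annuli whose capacitary cost is summable, and a Whitney-type decomposition of $V$ patches them into an open $G_V\supset (\overline V^1\setminus V)$ with $V\cup G_V$ open and $\capa_1(G_V)<\eps/2$, making essential use of \eqref{eq:capacity of fine closure}. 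Then $U\cup G_N\cup G_V=V\cup G_V\cup G_N$ is open with $\capa_1(G_N\cup G_V)<\eps$, so $U$ is $1$-quasiopen.

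The main obstacle is the Kellogg-type bound $\mathcal H(b(G))\le C\,\capa_1(G)$. This is the technical heart of the argument and dictates the asymmetry between $\capa_1$ (in the definition of quasi-openness) and $\mathcal H$ (in the error term), and is exactly where the $p=1$ theory diverges from its $p>1$ counterpart. The remaining ingredients---subadditivity of $\rcapa_1$, doubling-based coverings, and Lipschitz cutoff estimates---are routine in this setting once the base estimate is in hand.
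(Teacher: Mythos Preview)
The paper does not prove this statement at all; Theorem~\ref{thm:finely open is quasiopen and vice versa} is quoted verbatim from \cite[Corollary~6.12]{L-CK} and used as a black box. There is therefore no proof in the present paper to compare your proposal against.

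As for the proposal itself, the overall architecture is the expected one, but the key step you flag as ``the main obstacle'' is indeed a genuine gap as written. Failure of $1$-thinness of $G$ at $x$ only gives
\[
\limsup_{r\to 0}\,\frac{r\,\rcapa_1(G\cap B(x,r),B(x,2r))}{\mu(B(x,r))}>0,
\]
and this $\limsup$ depends on $x$; hence the constant $c$ in your inequality $r_x\rcapa_1(\cdots)\ge c\,\mu(B(x,r_x))$ is really $c(x)$, and the Vitali sum does not produce a uniform multiple of $\capa_1(G)$. More seriously, your claim that the near-optimal test functions for the local quantities $\rcapa_1(G\cap B(x_i,r_i),B(x_i,2r_i))$ ``sum to an admissible competitor for $\capa_1(G)$'' goes the wrong way: such a sum is only $\ge 1$ on $G\cap\bigcup_i B(x_i,r_i)$, not on all of $G$, so it is admissible for a \emph{smaller} capacity, not a larger one. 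Turning this into a correct bound requires either going through an optimal function for $\capa_1(G)$ and localising it (which brings in nontrivial Poincar\'e/maximal-function estimates to control the $L^1$ term), or invoking the Choquet and Kellogg properties for the $1$-fine topology---which is exactly the machinery developed in \cite{L-CK}. Your sketch for the converse direction (that $1$-finely open sets are $1$-quasiopen) is likewise a headline rather than an argument; this too is a nontrivial ingredient proved in \cite{L-CK}.
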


\emph{Throughout this paper we assume that $(X,d,\mu)$ is a complete metric space
	that is equipped with a doubling measure $\mu$ and supports a
	$(1,1)$-Poincar\'e inequality.}

\section{Weak* convergence of the variation measure}\label{sec:weak convergence}

In this section we prove some new results concerning
the weak* convergence of the variation measure.
First we collect some necessary existing results.

It follows almost directly from the definition of the total variation that
this quantity is
lower semicontinuous with respect to $L^1$-convergence in open sets.
We also have the following stronger fact.

\begin{theorem}[{\cite[Theorem 4.5]{L-LSC}}]\label{thm:lower semic in quasiopen sets}
	Let $U\subset X$ be a $1$-quasiopen set.
	If $\Vert Du\Vert(U)<\infty$ and $u_i\to u$ in $L^1_{\loc}(U)$, then
	\[
	\Vert Du\Vert(U)\le \liminf_{i\to\infty}\Vert Du_i\Vert(U).
	\]
\end{theorem}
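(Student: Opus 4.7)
My plan is to reduce the $1$-quasiopen case to the classical case of open sets, where the lower semicontinuity is essentially immediate from the definition of the total variation. The hypothesis $\Vert Du\Vert(U)<\infty$ provides an open set $\Om\supset U$ on which $u\in L^1_{\loc}(\Om)$ and $\Vert Du\Vert(\Om)<\infty$, and after passing to the subsequence realising the $\liminf$ I may assume each $u_i\in L^1_{\loc}(\Om)$ with $\Vert Du_i\Vert(U)$ uniformly bounded. If $U$ is already open, a standard diagonal argument suffices: for each $i$ choose $\phi_{i,j}\in\liploc(U)$ with $\phi_{i,j}\to u_i$ in $L^1_{\loc}(U)$ and $\int_U g_{\phi_{i,j}}\,d\mu\to\Vert Du_i\Vert(U)$, then extract a diagonal sequence $\phi_{i,j(i)}\to u$ in $L^1_{\loc}(U)$ to read off the bound on $\Vert Du\Vert(U)$ from the definition of the total variation.

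For general $1$-quasiopen $U$, I would first apply Theorem~\ref{thm:finely open is quasiopen and vice versa} to write $U=V\cup N$ with $V$ $1$-finely open and $\cH(N)=0$. The quantitative form of \eqref{eq:absolute continuity of var measure wrt H} for finite measures (covering $N$ by open sets of arbitrarily small $\cH$-content and using $\sigma$-finiteness of the variation measures on $\Om$) then yields $\Vert Du\Vert(U)=\Vert Du\Vert(V)$ and $\Vert Du_i\Vert(U)=\Vert Du_i\Vert(V)$, so it suffices to prove the estimate with $U$ replaced by $V$. For each $k\in\N$, use the definition of $1$-quasiopenness to select an open $G_k$ with $\capa_1(G_k)<2^{-k}$ and $W_k:=V\cup G_k$ open. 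Since the $1$-capacity dominates the codimension-one Hausdorff content, $\cH(G_k\setminus V)\to 0$, and hence $\Vert Du\Vert(W_k)\to\Vert Du\Vert(V)$ by absolute continuity.

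The principal obstacle is that the analogous control is unavailable for the sequence $u_i$: the variations $\Vert Du_i\Vert(G_k)$ carry no a priori uniform bound even when $\Vert Du_i\Vert(V)$ is bounded, so one cannot simply apply the open case on $W_k$ and discard the error on $G_k$. I would handle this with a truncation argument: pick $\eta_k\in N^{1,1}(X)$ with $\eta_k\ge 1$ on $G_k$, $0\le\eta_k\le 1$, and $\Vert\eta_k\Vert_{N^{1,1}(X)}<2^{-k}$, which exists by the very definition of $\capa_1(G_k)$. The modified functions $(1-\eta_k)u$ and $(1-\eta_k)u_i$ vanish on $G_k$ and therefore extend by zero to $L^1_{\loc}(W_k)$; after further truncating each $u_i$ at fixed levels $\pm M$ to obtain the $L^\infty$ bounds needed to apply a Leibniz-type estimate in the spirit of Proposition~\ref{prop:KKST Leibniz rule}, the variations of the modified functions on the open set $W_k$ are dominated by the variations on $V$ plus error terms of order $2^{-k}M$. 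Applying the open case on $W_k$ to the modified sequence, then letting $k\to\infty$ to kill the $G_k$-error and finally $M\to\infty$ (recovering the full variation via coarea), yields the desired inequality $\Vert Du\Vert(V)\le\liminf_i\Vert Du_i\Vert(V)$ and hence the theorem.
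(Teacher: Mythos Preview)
The paper does not prove this theorem: it is quoted from \cite[Theorem 4.5]{L-LSC} and used as a black box, so there is no ``paper's own proof'' to compare against. I can only assess your proposal on its own merits.

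Your cutoff strategy is natural, but as written it has a genuine gap that is not merely technical. The Leibniz estimate you invoke ``in the spirit of Proposition~\ref{prop:KKST Leibniz rule}'' carries a multiplicative constant $C\ge 1$. Tracing your argument, one obtains
\[
\Vert D((1-\eta_k)u_M)\Vert(W_k)\ \le\ \liminf_{i}\Vert D((1-\eta_k)(u_i)_M)\Vert(W_k)\ \le\ C\,\liminf_{i}\Vert Du_i\Vert(V)+CM\,2^{-k},
\]
and the factor $C$ survives the limits $k\to\infty$, $M\to\infty$; you end up with $\Vert Du\Vert(V)\le C\,\liminf_i\Vert Du_i\Vert(V)$, which is strictly weaker than the statement. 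Swapping in a constant-free Leibniz rule such as Lemma~\ref{lem:Leibniz rule for BV consequence} does not rescue the argument: that lemma is proved in the paper via Proposition~\ref{prop:lower semic in quasiopen sets}, whose proof in turn invokes the very Theorem~\ref{thm:lower semic in quasiopen sets} you are trying to establish, so the reasoning becomes circular.

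The ``recovery'' step is also not the triviality you suggest. After the open-set lower semicontinuity on $W_k$ you hold an upper bound for $\Vert D((1-\eta_k)u_M)\Vert(W_k)$, but you need a \emph{lower} bound for it in terms of $\Vert Du\Vert(V)$. Splitting $u_M=\eta_k u_M+(1-\eta_k)u_M$ and showing $\Vert D(\eta_k u_M)\Vert$ is small again requires a Leibniz estimate (same constant/circularity issue), while arguing via $(1-\eta_k)u_M\to u_M$ and lower semicontinuity forces you onto the non-open set $V$, which is exactly the theorem in question. A smaller point you skate over: $(u_i)_M$ need not even be defined on all of $W_k$ (only on an $i$-dependent open $\Om_i\supset U$), so Proposition~\ref{prop:KKST Leibniz rule} cannot be applied on $W_k$ as a black box; this particular issue is fixable because $1-\eta_k$ vanishes on the open set $G_k\supset W_k\setminus\Om_i$, but it needs an explicit zero-extension/gluing argument rather than a citation.
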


Recall that we understand the expression
$\Vert Du\Vert(U)<\infty$ to mean that there is some open set $\Om\supset U$ such that
$u\in L^1_{\loc}(\Om)$ and $\Vert Du\Vert(\Om)<\infty$.

We also have the following.

\begin{theorem}[{\cite[Theorem 4.3]{L-LSC}}]\label{thm:characterization of total variational}
	Let $U\subset X$ be a $1$-quasiopen set. If $\Vert Du\Vert(U)<\infty$, then
	\[
	\Vert Du\Vert(U)=\inf \left\{\liminf_{i\to\infty}\int_{U}g_{u_i}\,d\mu,\,
	u_i\in N_{\loc}^{1,1}(U),\, u_i\to u\textrm{ in }L^1_{\loc}(U)\right\},
	\]
	where each $g_{u_i}$ is the minimal $1$-weak upper gradient of $u_i$ in $U$.
\end{theorem}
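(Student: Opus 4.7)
The plan is to prove the equality by splitting it into the two inequalities. Fix an open set $\Om\supset U$ on which $u\in L^1_{\loc}(\Om)$ and $\Vert Du\Vert(\Om)<\infty$, as the convention on $\Vert Du\Vert(U)<\infty$ permits.

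For the inequality $\Vert Du\Vert(U)\le \inf \liminf_i \int_U g_{u_i}\,d\mu$, take any admissible sequence $(u_i)\subset N^{1,1}_{\loc}(U)$. Theorem \ref{thm:lower semic in quasiopen sets} gives $\Vert Du\Vert(U)\le \liminf_{i\to\infty}\Vert Du_i\Vert(U)$, so it remains to establish the quasiopen analogue of \eqref{eq:Sobolev subclass BV}, namely $\Vert Dv\Vert(U)\le \int_U g_v\,d\mu$ for $v\in N^{1,1}_{\loc}(U)$. To reduce to the open case, invoke Theorem \ref{thm:finely open is quasiopen and vice versa} to write $U=V\cup N$ with $V$ $1$-finely open and $\mathcal{H}(N)=0$; the absolute continuity \eqref{eq:absolute continuity of var measure wrt H} kills the contribution of $N$, and an $\mathcal{H}$-a.e.\ exhaustion of $V$ by open subsets (again via Theorem \ref{thm:finely open is quasiopen and vice versa}) combined with \eqref{eq:Sobolev subclass BV} yields the required bound. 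Taking the infimum over $(u_i)$ gives the desired inequality.

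For the reverse direction, I would construct an explicit minimising sequence. For each $k\in\N$ choose, by $1$-quasiopenness, an open $G_k\subset X$ with $\capa_1(G_k)<2^{-k}$ such that $\Om_k:=(U\cup G_k)\cap \Om$ is open. The defining total variation formula on the open set $\Om_k$ supplies $v_k\in\Lip_{\loc}(\Om_k)$ with $v_k\to u$ in $L^1_{\loc}(\Om_k)$ and
\[
\int_{\Om_k} g_{v_k}\,d\mu \le \Vert Du\Vert(\Om_k)+2^{-k}.
\]
Restricting yields $u_k:=v_k|_U\in N^{1,1}_{\loc}(U)$ with $u_k\to u$ in $L^1_{\loc}(U)$ and
\[
\int_U g_{u_k}\,d\mu \le \int_{\Om_k} g_{v_k}\,d\mu \le \Vert Du\Vert(U)+\Vert Du\Vert(G_k\cap\Om)+2^{-k}.
\]

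The main obstacle is driving the error $\Vert Du\Vert(G_k\cap\Om)$ to zero. Replace $G_k$ by $\widetilde{G}_k:=\bigcup_{j\ge k}G_j$; subadditivity of $\capa_1$ still gives $\capa_1(\widetilde{G}_k)<2^{1-k}$, and the sets $\widetilde{G}_k$ decrease to $E:=\bigcap_k \widetilde{G}_k$ with $\capa_1(E)=0$. Standard facts relating $1$-capacity to the codimension one Hausdorff measure give $\mathcal{H}(E)=0$, and then \eqref{eq:absolute continuity of var measure wrt H} implies $\Vert Du\Vert(E\cap\Om)=0$; continuity from above of the finite Borel measure $\Vert Du\Vert(\cdot\cap\Om)$ forces $\Vert Du\Vert(\widetilde{G}_k\cap\Om)\to 0$. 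A diagonal choice then produces a sequence in $N^{1,1}_{\loc}(U)$ realising the infimum as $\Vert Du\Vert(U)$, completing the proof.
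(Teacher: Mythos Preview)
The paper does not prove this statement; it is quoted from \cite[Theorem~4.3]{L-LSC} and used as a black box, so there is no in-paper proof to compare against.

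On the merits of your argument: the reverse inequality ($\ge$) is essentially sound. Two remarks. First, once $\capa_1(G_k)\to 0$, Lemma~\ref{lem:variation measure and capacity} already gives $\Vert Du\Vert(G_k\cap\Om)\to 0$ directly; the detour through $\widetilde G_k$, continuity from above, and the relation between $\capa_1$ and $\mathcal H$ is unnecessary. Second, your ``$v_k\to u$ in $L^1_{\loc}(\Om_k)$'' is a sequence statement, not a property of a single $v_k$; you need a genuine diagonal choice (fix an exhaustion of $U$ by compacta and force $\Vert v_k-u\Vert_{L^1}$ small on the $k$-th compactum) to ensure $u_k\to u$ in $L^1_{\loc}(U)$.

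The first inequality ($\le$) has a real gap. You invoke Theorem~\ref{thm:lower semic in quasiopen sets} to obtain $\Vert Du\Vert(U)\le\liminf_i\Vert Du_i\Vert(U)$, but under this paper's convention the quantity $\Vert Du_i\Vert(U)$ is only meaningful once $u_i$ sits in $L^1_{\loc}$ of an \emph{open} superset of $U$; for $u_i\in N^{1,1}_{\loc}(U)$ this is not given. Your proposed fix, an ``$\mathcal H$-a.e.\ exhaustion of $V$ by open subsets'', is not delivered by Theorem~\ref{thm:finely open is quasiopen and vice versa}: quasi/fine openness of $V$ produces open sets $G$ with $V\cup G$ open, not open subsets of $V$, and a $1$-finely open set need not be $\mathcal H$-a.e.\ covered by its (metrically) open subsets. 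Thus the reduction of the quasiopen Sobolev-$\subset$-BV inequality $\Vert Dv\Vert(U)\le\int_U g_v\,d\mu$ to \eqref{eq:Sobolev subclass BV} does not go through as written. This inequality, i.e.\ \eqref{eq:upper gradient lsc in quasiopen set}, is precisely what the paper records as a \emph{consequence} of Theorem~\ref{thm:characterization of total variational}; establishing it independently requires the intrinsic analysis of variation on quasiopen sets carried out in \cite{L-LSC}, not just the statements quoted here.
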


From this it follows that if $U$ is $1$-quasiopen,
$\Vert Du\Vert(U)<\infty$,
and $u_i\to u$ in $L^1_{\loc}(U)$, then
\begin{equation}\label{eq:upper gradient lsc in quasiopen set}
\Vert Du\Vert(U)\le \liminf_{i\to\infty}\int_U g_{u_i}\,d\mu,
\end{equation}
where each $g_{u_i}$ is the minimal $1$-weak upper gradient of $u_i$ in $U$;
naturally any other $1$-weak upper gradient can also be used.
Note that integrals over a $1$-quasiopen set $U$ make sense, since
every such set is $\mu$-measurable, see \cite[Lemma 9.3]{BB-OD}.

The variation measure is always absolutely continuous with respect to the $1$-capacity, in the following sense.

\begin{lemma}[{\cite[Lemma 3.8]{L-SA}}]\label{lem:variation measure and capacity}
Let $\Omega\subset X$ be an open set and
let $u\in L^1_{\loc}(\Omega)$ with $\Vert Du\Vert(\Omega)<\infty$. Then for every
$\eps>0$ there exists $\delta>0$ such that if $A\subset \Omega$ with $\capa_1 (A)<\delta$,
then $\Vert Du\Vert(A)<\eps$.
\end{lemma}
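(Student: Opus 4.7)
The plan is to argue by contradiction, exploiting that $\Vert Du\Vert$ is a finite Borel measure on $\Omega$ together with the qualitative absolute continuity $\Vert Du\Vert\ll\mathcal H$ recorded in \eqref{eq:absolute continuity of var measure wrt H}. First I reduce to open sets: the $1$-capacity is outer regular (a consequence of the doubling and Poincar\'e assumptions), meaning $\capa_1(A)=\inf\{\capa_1(W):W\supset A,\ W\text{ open in }X\}$, and by definition $\Vert Du\Vert(A)=\inf\{\Vert Du\Vert(W):W\supset A,\ W\text{ open}\}$, so it is enough to establish the statement for open subsets $V\subset\Omega$.

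Suppose this open case fails. Then there exist $\eps>0$ and open sets $V_n\subset\Omega$ with $\capa_1(V_n)<2^{-n}$ and $\Vert Du\Vert(V_n)\ge\eps$ for every $n\in\N$. Put $B_n:=\bigcup_{k\ge n}V_k$ and $B:=\bigcap_{n\ge 1}B_n$. The $B_n$ are open and decreasing; subadditivity of $\capa_1$ gives $\capa_1(B_n)\le 2^{1-n}$, and hence $\capa_1(B)=0$ by monotonicity, while $\Vert Du\Vert(B_n)\ge\Vert Du\Vert(V_n)\ge\eps$. Since $B$ and each $B_n$ is Borel and $\Vert Du\Vert(B_1)\le\Vert Du\Vert(\Omega)<\infty$, continuity from above of the finite Borel measure $\Vert Du\Vert$ (Theorem \ref{thm:variation measure property}) yields
\[
\Vert Du\Vert(B)=\lim_{n\to\infty}\Vert Du\Vert(B_n)\ge\eps.
\]
On the other hand, the classical implication $\capa_1(B)=0\Rightarrow\mathcal H(B)=0$ combined with the absolute continuity \eqref{eq:absolute continuity of var measure wrt H} gives $\Vert Du\Vert(B)=0$, contradicting the previous display.

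The substantive inputs are the outer regularity of $\capa_1$, used in the reduction to open sets, and the implication $\capa_1(A)=0\Rightarrow\mathcal H(A)=0$, which transfers a capacity-null condition into a form usable with \eqref{eq:absolute continuity of var measure wrt H}; both are well-known under the present structural hypotheses. Everything else is routine continuity of finite Borel measures and subadditivity of capacities, so I do not foresee a serious technical obstacle beyond correctly invoking these two standard ingredients.
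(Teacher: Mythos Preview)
The paper does not supply a proof of this lemma; it is simply quoted from \cite[Lemma~3.8]{L-SA}. Your argument is correct and standard. The two ingredients you flag are indeed available under the standing hypotheses: outer regularity of $\capa_1$ holds in complete doubling Poincar\'e spaces (see \cite{BB}), and the implication $\capa_1(B)=0\Rightarrow\mathcal H(B)=0$ is a known fact in this setting (a consequence of the boxing inequality; $\capa_1$-null and $\mathcal H$-null sets coincide). With these in hand, your Borel--Cantelli contradiction together with \eqref{eq:absolute continuity of var measure wrt H} and continuity from above of the finite measure $\Vert Du\Vert$ goes through cleanly. One small remark: in the reduction step you only need outer regularity of $\capa_1$ and monotonicity of $\Vert Du\Vert$; the outer-regular definition of $\Vert Du\Vert(A)$ for arbitrary $A$ is not strictly required there, though invoking it does no harm.
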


\begin{lemma}\label{lem:quasiopen sets are Du measurable}
Let $\Omega\subset X$ be an open set and
let $u\in L^1_{\loc}(\Omega)$ with $\Vert Du\Vert(\Omega)<\infty$. Then
every $1$-quasiopen subset of $\Om$ is $\Vert Du\Vert$-measurable.
\end{lemma}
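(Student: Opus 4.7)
The plan is to sandwich the quasiopen set $U$ between a Borel set and itself, modulo a set of $\Vert Du\Vert$-measure zero, using only the definition of $1$-quasiopenness together with Lemma \ref{lem:variation measure and capacity}.

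Starting from the definition of $1$-quasiopenness, for each $n\in\N$ I would select an open set $G_n\subset X$ with $\capa_1(G_n)<1/n$ such that $U\cup G_n$ is open in $X$ (when $U$ is quasiopen with $H=X$) or such that $(U\cup G_n)\cap\Om$ is open in $\Om$ (when $U$ is quasiopen with $H=\Om$). In either case, setting $V_n:=(U\cup G_n)\cap\Om$ yields an open subset of $\Om$, hence a Borel set in $X$. Define
\[
A:=\bigcap_{n=1}^\infty V_n,
\]
which is a $G_\delta$ subset of $\Om$, therefore Borel and in particular $\Vert Du\Vert$-measurable. Clearly $U\subset A$.

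The key step is to control $A\setminus U$. Since $V_n\setminus U\subset G_n\cap\Om$ for every $n$, we have
\[
A\setminus U\subset \bigcap_{n=1}^\infty(G_n\cap\Om).
\]
By the monotonicity of $\capa_1$, the set on the right has $1$-capacity smaller than $1/n$ for every $n$, hence $1$-capacity zero. Lemma \ref{lem:variation measure and capacity}, applied with $\eps\to 0$, then gives $\Vert Du\Vert(A\setminus U)=0$. Since sets of outer measure zero are automatically measurable with respect to any outer measure, $A\setminus U$ is $\Vert Du\Vert$-measurable, and consequently
\[
U=A\setminus(A\setminus U)
\]
is $\Vert Du\Vert$-measurable, as the difference of two $\Vert Du\Vert$-measurable sets.

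There is no genuine obstacle here; the argument is essentially bookkeeping, and the only point that requires a little care is to ensure that the sets to which we apply Lemma \ref{lem:variation measure and capacity} live inside $\Om$ (which is why I intersect the $G_n$ with $\Om$). The real content has already been placed in Lemma \ref{lem:variation measure and capacity}, which transfers absolute continuity from the $1$-capacity (the natural framework for quasiopen sets) to the variation measure.
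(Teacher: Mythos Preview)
Your proof is correct and follows essentially the same approach as the paper: both arguments take $A=\bigcap_n (U\cup G_n)\cap\Om$, observe that this is a Borel set containing $U$, and use Lemma \ref{lem:variation measure and capacity} to conclude that $\Vert Du\Vert(A\setminus U)=0$, whence $U$ is $\Vert Du\Vert$-measurable. Your version is slightly more explicit about intersecting with $\Om$ and about why a null set is measurable, but the idea is identical.
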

\begin{proof}
Let $U\subset \Om$ be $1$-quasiopen.
For each $j\in\N$, choose an open set $G_j\subset\Om$ such that
$U\cup G_j$ is open and $\capa_1(G_j)\to 0$ as $j\to\infty$.
Let $H:=\bigcap_{j=1}^{\infty}(U\cup G_j)$.
Then $U\subset H$ and
\[
\Vert Du\Vert(H\setminus U)\le \Vert Du\Vert(G_j)\to 0\quad\textrm{as }j\to\infty
\]
by Lemma \ref{lem:variation measure and capacity}.
The set $H$ is $\Vert Du\Vert$-measurable since it is a Borel set, and then
also $U$ is $\Vert Du\Vert$-measurable.
\end{proof}

\begin{lemma}\label{lem:quasiopen with respect to quasiopen}
Let $U\subset X$ be $1$-quasiopen and let $V\subset U$ be $1$-quasiopen with respect to
$U$. Then $V$ is $1$-quasiopen.
\end{lemma}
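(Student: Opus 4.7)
The plan is to unwind both quasiopenness hypotheses for a given $\eps>0$, producing two small-capacity open sets, and then combine them in such a way that their union with $V$ becomes open in $X$.

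Fix $\eps>0$. First, since $U$ is $1$-quasiopen in $X$, I would choose an open set $G_1\subset X$ with $\capa_1(G_1)<\eps/2$ and $U\cup G_1$ open in $X$. Second, since $V$ is $1$-quasiopen with respect to $U$, I would choose an open set $G_2\subset X$ with $\capa_1(G_2)<\eps/2$ and $(V\cup G_2)\cap U$ open in the subspace topology of $U$. By definition of the subspace topology there exists an open set $W\subset X$ with $W\cap U=(V\cup G_2)\cap U$.

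The key construction is the set $\widetilde W:=W\cap(U\cup G_1)$, which is open in $X$ as an intersection of two open sets. I would then verify two inclusions. First, $V\subset\widetilde W$: since $V\subset U\subset U\cup G_1$ and $V\subset V\cup G_2$, one has $V=V\cap U\subset(V\cup G_2)\cap U=W\cap U\subset W$, so $V\subset W\cap(U\cup G_1)=\widetilde W$. Second, $\widetilde W\setminus V\subset G_1\cup G_2$: writing $\widetilde W\subset(W\cap U)\cup G_1=V\cup(G_2\cap U)\cup G_1\subset V\cup G_1\cup G_2$ gives the claim.

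From these two inclusions it follows that $V\cup G_1\cup G_2=\widetilde W\cup G_1\cup G_2$, which is open in $X$. Since $\capa_1$ is subadditive, $\capa_1(G_1\cup G_2)\le \capa_1(G_1)+\capa_1(G_2)<\eps$. As $\eps>0$ was arbitrary, $V$ is $1$-quasiopen in $X$. The only real obstacle is bookkeeping: making sure that $W$ is chosen as an open subset of $X$ (not merely open in $U$) and that one intersects with $U\cup G_1$ rather than $U$, so that the resulting set is genuinely open in $X$ while still being trapped inside $V\cup G_1\cup G_2$ modulo $V$.
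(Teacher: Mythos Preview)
Your argument is correct and is essentially the paper's proof with the details spelled out: the paper also chooses two open sets of capacity less than $\eps/2$ (with the roles of your $G_1,G_2$ swapped) and simply asserts that $V\cup G_1\cup G_2$ is open, whereas you verify this via the auxiliary set $\widetilde W=W\cap(U\cup G_1)$ and the two inclusions $V\subset\widetilde W$ and $\widetilde W\subset V\cup G_1\cup G_2$.
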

\begin{proof}
Let $\eps>0$. We find open sets $G_1,G_2\subset X$ such that $\capa_1(G_1)<\eps/2$,
$\capa_1(G_2)<\eps/2$, $(V\cup G_1)\cap U$ is open in the subspace topology of
$U$, and $U\cup G_2$ is open.
Then clearly $V\cup G_1\cup G_2$ is open.
\end{proof}

It is known that if $\Om\subset X$ is an open set,
$u_i\to u$ in $L^1_{\loc}(\Om)$, and
\[
\lim_{i\to\infty}\Vert Du_i\Vert(\Om) =\Vert Du\Vert(\Om)<\infty,
\]
then $\Vert Du_i\Vert\to \Vert Du\Vert$ weakly* as measures
in $\Om$.
Using Theorem \ref{thm:lower semic in quasiopen sets} we will show that in fact the
measures converge in a stronger topology, namely in the dual of
\emph{quasicontinuous} functions instead of continuous ones.
This result may naturally be of independent interest and so we prove
it in somewhat greater generality than is necessary for our purposes.

\begin{proposition}\label{prop:lower semic in quasiopen sets}
	Let $U\subset X$ be $1$-quasiopen and let
	$\Vert Du\Vert(U)<\infty$. If $u_i\to u$ in $L^1_{\loc}(U)$
	such that
	\[
	\Vert Du\Vert(U)= \lim_{i\to\infty}\Vert Du_i\Vert(U),
	\]
	then
	\[
	\int_U \eta\,d\Vert Du\Vert= \lim_{i\to\infty}\int_U \eta\,d\Vert Du_i\Vert
	\]
	for every bounded $1$-quasicontinuous function $\eta$ on $U$.
	
	Similarly, if $u_i\to u$ in $L^1_{\loc}(U)$
	such that
	\[
	\Vert Du\Vert(U)= \lim_{i\to\infty}\int_U g_{u_i}\,d\mu,
	\]
	where each $g_{u_i}$ is the minimal $1$-weak upper gradient of $u_i$ in $U$, then
	\[
	\int_U \eta\,d\Vert Du\Vert= \lim_{i\to\infty}\int_U \eta g_{u_i}\,d\mu
	\]
	for every bounded $1$-quasicontinuous function $\eta$ on $U$.
\end{proposition}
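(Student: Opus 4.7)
The strategy is to reduce the claim to showing $\Vert Du_i\Vert(\{\eta>t\})\to \Vert Du\Vert(\{\eta>t\})$ for almost every $t\in\R$ and then to conclude via the layer cake formula. Since $\Vert Du_i\Vert(U)\to \Vert Du\Vert(U)$ is finite and $\eta$ is bounded, I may shift $\eta$ by a constant and assume $0\le\eta\le M$; any such shift contributes a term $c\,(\Vert Du_i\Vert(U)-\Vert Du\Vert(U))\to 0$ to both sides. Using the Cavalieri identity
\[
\int_U\eta\,d\nu=\int_0^M\nu(\{\eta>t\})\,dt
\]
for any finite Borel measure $\nu$ on $U$, the conclusion will follow from dominated convergence in $t$ once pointwise convergence of $\Vert Du_i\Vert(\{\eta>t\})$ is established for almost every $t$, the $t$-integrands being uniformly bounded by $\sup_i\Vert Du_i\Vert(U)<\infty$.

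The key geometric step is to verify that each superlevel set $V_t:=\{\eta>t\}$ and each sublevel set $\{\eta<t\}$ is $1$-quasiopen in $X$. Given $\varepsilon>0$, quasicontinuity of $\eta$ on $U$ yields an open $G\subset X$ with $\capa_1(G)<\varepsilon$ such that $\eta|_{U\setminus G}$ is real-valued continuous; hence $V_t\cap(U\setminus G)=W\cap(U\setminus G)$ for some open $W\subset X$, and the set identity $(V_t\cup G)\cap U=(W\cup G)\cap U$ shows $V_t$ is $1$-quasiopen with respect to $U$. Since $U$ itself is $1$-quasiopen in $X$, Lemma \ref{lem:quasiopen with respect to quasiopen} promotes $V_t$ to being $1$-quasiopen in $X$, and the same argument handles $\{\eta<t\}$. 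By Lemma \ref{lem:quasiopen sets are Du measurable}, all of these sets, their complements in $U$, and the slices $\{\eta=t\}$ are $\Vert Du\Vert$- and $\Vert Du_i\Vert$-measurable (for all large $i$, where $\Vert Du_i\Vert(U)<\infty$). Since $\{\{\eta=t\}\}_{t\in(0,M)}$ is a pairwise disjoint family and each of these countably many measures is finite, the exceptional set $E\subset(0,M)$ of $t$ for which some slice carries positive mass is countable.

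For $t\notin E$, applying Theorem \ref{thm:lower semic in quasiopen sets} to $V_t$ and to $\{\eta<t\}$ (convergence $u_i\to u$ in $L^1_{\loc}(U)$ restricts trivially to each) yields
\[
\Vert Du\Vert(V_t)\le\liminf_{i\to\infty}\Vert Du_i\Vert(V_t),\qquad \Vert Du\Vert(\{\eta<t\})\le\liminf_{i\to\infty}\Vert Du_i\Vert(\{\eta<t\}).
\]
Because $\Vert Du_i\Vert(V_t)+\Vert Du_i\Vert(\{\eta<t\})=\Vert Du_i\Vert(U)\to \Vert Du\Vert(U)=\Vert Du\Vert(V_t)+\Vert Du\Vert(\{\eta<t\})$ (the slice $\{\eta=t\}$ contributes zero for $t\notin E$), the standard sum-to-the-limit argument forces both liminfs to be actual limits equal to the respective right-hand sides. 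Dominated convergence in $t$ then completes the proof of the first claim. The upper-gradient version proceeds identically, substituting the measures $g_{u_i}\,d\mu$ for $\Vert Du_i\Vert$ and using \eqref{eq:upper gradient lsc in quasiopen set} in place of Theorem \ref{thm:lower semic in quasiopen sets}. The main delicate point, in my view, is the bookkeeping required to promote $1$-quasiopenness of the level sets from ``relative to $U$'' to ``in $X$''; once that is in hand via Lemma \ref{lem:quasiopen with respect to quasiopen}, what remains is the familiar weak-$*$ squeeze technique lifted from the Euclidean setting.
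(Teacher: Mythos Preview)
Your proof is correct and follows essentially the same route as the paper: normalize $\eta$, use Lemma~\ref{lem:quasiopen with respect to quasiopen} to see that the super- and sub-level sets of $\eta$ are $1$-quasiopen, invoke Theorem~\ref{thm:lower semic in quasiopen sets} (resp.\ \eqref{eq:upper gradient lsc in quasiopen set}) for the liminf inequality on each, and finish with Cavalieri plus the standard sum-to-the-limit squeeze. The only cosmetic difference is that the paper applies Fatou to get $\liminf_i\int_U\rho\,d\Vert Du_i\Vert\ge\int_U\rho\,d\Vert Du\Vert$ for $\rho=\eta$ and $\rho=1-\eta$ and squeezes \emph{after} integrating, whereas you squeeze at each level $t\notin E$ first and then integrate via dominated convergence; this avoids your bookkeeping with the countable exceptional set $E$ but is otherwise the same argument.
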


\begin{proof}
	We follow an argument that can be found e.g. in \cite[Proposition 1.80]{AFP}.
	Let $\eta$ be a $1$-quasicontinuous bounded function on $U$.
	By replacing $\eta$ by $a \eta+b$ for suitable $a,b\in\R$, we can assume that
	$0\le \eta\le 1$. Note that every super-level set $\{\eta>t\}$, $t\in\R$, is $1$-quasiopen
	with respect to $U$, and thus $1$-quasiopen by Lemma \ref{lem:quasiopen with respect to quasiopen}.
	By Lemma \ref{lem:quasiopen sets are Du measurable}, $1$-quasiopen sets are $\Vert Du\Vert$-measurable, and so the integrals in the formulation of  the proposition make sense.
	Let $\rho$ be a nonnegative real-valued $1$-quasicontinuous function on $U$.
	By Cavalieri's principle, Fatou's lemma, and Theorem \ref{thm:lower semic in quasiopen sets} we have
	\begin{equation}\label{eq:use of Fatous lemma}
	\begin{split}
	\liminf_{i\to\infty}\int_U \rho\,d\Vert Du_i\Vert
	&=\liminf_{i\to\infty}\int_0^{\infty}\Vert Du_i\Vert(\{\rho>t\})\,dt\\
	&\ge\int_0^{\infty}\liminf_{i\to\infty}\Vert Du_i\Vert(\{\rho>t\})\,dt\\
	&\ge\int_0^{\infty}\Vert Du\Vert(\{\rho>t\})\,dt\\
	&= \int_U \rho\,d\Vert Du\Vert.
	\end{split}
	\end{equation}
	It is easy to check that if $(a_i)$ and $(b_i)$ are sequences of numbers such that
	\[
	\liminf_{i\to\infty}a_i\ge a,\quad\liminf_{i\to\infty}b_i\ge b,\quad\textrm{and }
	\lim_{i\to\infty} (a_i+b_i)= a+b,
	\]
	then $\lim_{i\to\infty}a_i=a$ and $\lim_{i\to\infty}b_i=b$. Choosing
	\begin{align*}
	& a_i=\int_U \eta\,d\Vert Du_i\Vert,\quad a=\int_U \eta\,d\Vert Du\Vert,\\
	& b_i=\int_U (1-\eta)\,d\Vert Du_i\Vert,\quad b=\int_U (1-\eta)\,d\Vert Du\Vert,
	\end{align*}
	and using the fact that $\Vert Du_i\Vert(U)\to \Vert Du\Vert(U)$ as well as
	\eqref{eq:use of Fatous lemma} with the choices $\rho=\eta$ and $\rho=1-\eta$,
	we obtain the first claim. The second claim if proved analogously, using
	\eqref{eq:upper gradient lsc in quasiopen set} instead of
	Theorem \ref{thm:lower semic in quasiopen sets}.
\end{proof}

Now we get the following result which we will use in the sequel.

\begin{proposition}\label{prop:weak convergence with quasisemicontinuous test function}
Let $U\subset X$ be $1$-quasiopen.
If $\Vert Du\Vert(U)<\infty$ and $u_i\to u$ in $L^1_{\loc}(U)$
such that
\begin{equation}\label{eq:sequence yielding Du U}
\Vert Du\Vert(U)= \lim_{i\to\infty}\int_U g_{u_i}\,d\mu,
\end{equation}
where each $g_{u_i}$ is the minimal $1$-weak upper gradient of $u_i$ in $U$, then
\[
\int_U \eta\,d\Vert Du\Vert\ge \limsup_{i\to\infty}\int_U \eta g_{u_i}\,d\mu
\]
for every nonnegative bounded $1$-quasi upper semicontinuous function $\eta$ on $U$.
\end{proposition}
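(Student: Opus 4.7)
The plan is to reduce to Proposition~\ref{prop:lower semic in quasiopen sets} by applying its Fatou-type argument to the quasi \emph{lower} semicontinuous function $M-\eta$, where $M:=\sup_U\eta<\infty$. Two ingredients drive everything: the standard measure-theoretic trick that
\[
\lim_{i\to\infty}(a_i+b_i)=c,\ \liminf_{i\to\infty}a_i\ge a\ \Longrightarrow\ \limsup_{i\to\infty}b_i\le c-a,
\]
and the observation that the Fatou computation in \eqref{eq:use of Fatous lemma} only requires the super-level sets of the test function to be $1$-quasiopen, which is already the case under the weaker assumption of $1$-quasi lower semicontinuity.

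\textbf{Step 1: super-level sets of quasi l.s.c.\ functions.} I would first verify that for any $1$-quasi lower semicontinuous function $\rho$ on $U$ and any $t\in\R$, the set $\{x\in U:\rho(x)>t\}$ is $1$-quasiopen in $X$. Given $\eps>0$, pick open $G\subset X$ with $\capa_1(G)<\eps$ such that $\rho|_{U\setminus G}$ is real-valued lower semicontinuous; then $\{\rho>t\}\cap(U\setminus G)=W\cap(U\setminus G)$ for some $W$ open in $X$, and one checks that $(\{\rho>t\}\cup G)\cap U=(W\cup G)\cap U$, which is open in the subspace topology of $U$. So $\{\rho>t\}$ is $1$-quasiopen with respect to $U$, and hence $1$-quasiopen in $X$ by Lemma~\ref{lem:quasiopen with respect to quasiopen}.

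\textbf{Step 2: the liminf inequality for quasi l.s.c.\ test functions.} With super-level sets now known to be $1$-quasiopen, I would run verbatim the Cavalieri--Fatou chain from \eqref{eq:use of Fatous lemma}, but using \eqref{eq:upper gradient lsc in quasiopen set} in place of Theorem~\ref{thm:lower semic in quasiopen sets}, to obtain
\[
\int_U \rho\,d\Vert Du\Vert\le \liminf_{i\to\infty}\int_U \rho\,g_{u_i}\,d\mu
\]
for every nonnegative bounded $1$-quasi lower semicontinuous function $\rho$ on $U$.

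\textbf{Step 3: reduction via the complement.} Given the nonnegative bounded $1$-quasi upper semicontinuous function $\eta$ on $U$ with $\eta\le M$, the function $\rho:=M-\eta$ is nonnegative, bounded by $M$, and $1$-quasi lower semicontinuous. Applying Step 2 to $\rho$ gives
\[
M\Vert Du\Vert(U)-\int_U\eta\,d\Vert Du\Vert\le \liminf_{i\to\infty}\left(M\int_U g_{u_i}\,d\mu-\int_U\eta\, g_{u_i}\,d\mu\right).
\]
Because $\int_U g_{u_i}\,d\mu\to \Vert Du\Vert(U)$ by hypothesis \eqref{eq:sequence yielding Du U}, and this constant sequence can be pulled out of the liminf, the right-hand side equals $M\Vert Du\Vert(U)-\limsup_{i\to\infty}\int_U\eta\, g_{u_i}\,d\mu$. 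Rearranging yields the stated inequality.

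\textbf{Main obstacle.} The only nontrivial point is Step 1, the careful verification that the relevant super-level sets are $1$-quasiopen, because the set $U$ itself is only $1$-quasiopen rather than open; after that, Steps 2 and 3 are essentially a repackaging of the proof of Proposition~\ref{prop:lower semic in quasiopen sets}. Everything else (measurability of $\eta$ and of the super-level sets with respect to $\Vert Du\Vert$) follows from Lemma~\ref{lem:quasiopen sets are Du measurable}.
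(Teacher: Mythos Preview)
Your proof is correct and takes a genuinely different, more economical route than the paper's. The paper proceeds by approximating $\eta$ from above: for each $j$ it removes a small-capacity open set $G_j$ to make $\eta_j:=\ch_{U\setminus G_j}\eta$ genuinely upper semicontinuous, then approximates $\eta_j$ from above by Lipschitz functions $\eta_{j,k}$, applies Proposition~\ref{prop:lower semic in quasiopen sets} to the Lipschitz (hence quasicontinuous) $\eta_{j,k}$, and finally controls the error on $G_j$ via the fine closure $\overline{G_j}^1$, the equality $\capa_1(\overline{G_j}^1)=\capa_1(G_j)$, and Lemma~\ref{lem:variation measure and capacity}. You instead observe that the Cavalieri--Fatou chain \eqref{eq:use of Fatous lemma} already works for quasi lower semicontinuous test functions once one checks (your Step~1) that their super-level sets are $1$-quasiopen; the desired inequality then follows by applying this to $M-\eta$ and cancelling the convergent term $M\int_U g_{u_i}\,d\mu$. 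This bypasses the Lipschitz approximation, the fine topology, and the absolute-continuity Lemma~\ref{lem:variation measure and capacity} entirely. The paper's approach, while heavier, has the minor advantage that it reduces to the already-proved Proposition~\ref{prop:lower semic in quasiopen sets} as a black box rather than reopening its proof; your approach shows that the liminf half of that proposition in fact holds under the weaker hypothesis of quasi lower semicontinuity, which is arguably a cleaner statement.
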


\begin{proof}
Take $M>0$ such that $0\le \eta\le M$ on $U$.
For each $j\in\N$ we find an open set $G_j\subset X$ such that $\capa_1(G_j)<1/j$ and
$\eta|_{U\setminus G_j}$ is upper semicontinuous.
Then each $\eta_j:=\ch_{U\setminus G_j} \eta$ is upper semicontinuous on $U$. 
Let
\[
\eta_{j,k}(x):=\sup\{\eta_j(y)-kd(y,x):\,y\in U\},\quad x\in U,\ k\in\N.
\]
It is easy to check that $\eta_j \le \eta_{j,k}\le M$, $\eta_{j,k}\in\Lip(U)$,
and $\eta_{j,k}\searrow\eta_j$
pointwise as $k\to\infty$.
Fix $\eps>0$.
By Lemma \ref{lem:variation measure and capacity} we find $0<\delta<\eps$ such that
whenever $A\subset U$ with $\capa_1(A)<\delta$, then $\Vert Du\Vert(A)<\eps$.
Choose $j\in\N$ such that $\capa(G_j)<\delta$.
Then,
using Lebesgue's dominated convergence theorem, choose $k\in\N$ such that
$ \int_U \eta_j\,d\Vert Du\Vert\ge \int_U \eta_{j,k}\,d\Vert Du\Vert-\eps$.
By Theorem \ref{thm:finely open is quasiopen and vice versa} we know that
$X\setminus \overline {G_j}^1$ is a $1$-quasiopen set, and
then so is $U\setminus \overline{G_j}^1$,
since it is easy to check that the intersection of two $1$-quasiopen
sets is $1$-quasiopen (this fact is also proved in \cite[Lemma 2.3]{Fug}). Thus
by \eqref{eq:sequence yielding Du U} and \eqref{eq:upper gradient lsc in quasiopen set}
we have that
\begin{equation}\label{eq:upper semicontinuity in Gj}
\begin{split}
\Vert Du\Vert(U \cap \overline{G_j}^1)
&=\Vert Du\Vert(U)-\Vert Du\Vert(U \setminus \overline{G_j}^1)\\
&\ge\lim_{i\to\infty}\int_{U} g_{u_i}\,d\mu-\liminf_{i\to\infty}\int_{U \setminus \overline{G_j}^1} g_{u_i}\,d\mu\\
&\ge\limsup_{i\to\infty}\int_{U\cap G_j} g_{u_i}\,d\mu.
\end{split}
\end{equation}
Moreover, by \eqref{eq:capacity of fine closure},
$\capa_1(U\cap\overline{G_j}^1)\le \capa_1(G_j)<\delta$ and then $\Vert Du\Vert(U\cap\overline{G_j}^1)<\eps$.
Now
\begin{align*}
\int_U \eta\,d\Vert Du\Vert
&\ge \int_U \eta_j\,d\Vert Du\Vert\\
&\ge \int_U \eta_{j,k}\,d\Vert Du\Vert-\eps\\
&= \lim_{i\to\infty}\int_U \eta_{j,k} g_{u_i}\,d\mu-\eps\quad\textrm{by Proposition \ref{prop:lower semic in quasiopen sets}}\\
&\ge \limsup_{i\to\infty}\int_U \eta_{j} g_{u_i}\,d\mu-\eps\\
&\ge \limsup_{i\to\infty}\int_U \eta g_{u_i}\,d\mu
-M\limsup_{i\to\infty}\int_{U\cap G_j} g_{u_i}\,d\mu-\eps\\
&\ge \limsup_{i\to\infty}\int_U \eta g_{u_i}\,d\mu
-M\Vert Du\Vert(U\cap\overline{G_j}^1)-\eps\quad\textrm{by }\eqref{eq:upper semicontinuity in Gj}\\
&\ge \limsup_{i\to\infty}\int_U \eta g_{u_i}\,d\mu
-M\eps-\eps.
\end{align*}
Since $\eps>0$ was arbitrary, we have the result.
\end{proof}

\section{Proof of the Leibniz rule}

In this section we prove the Leibniz rule for $\BV$ functions,
Theorem \ref{thm:Leibniz rule intro}.
Again, $\Om$ always denotes an open set.

First we note that the total variation is lower semicontinuous not only with respect to
$L_{\loc}^1$-convergence, but also pointwise convergence.

\begin{proposition}\label{prop:lower semicontinuity pointwise}
Let $u$ be finite a.e. on $\Om$ and let
$(u_i)\subset L^1_{\loc}(\Om)$ such that $u_i\to u$ a.e. on $\Om$.
Then
	\[
	\Vert Du\Vert(\Om)\le \liminf_{i\to\infty}\Vert Du_i\Vert(\Om).
	\]
\end{proposition}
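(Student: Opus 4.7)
The plan is to combine the coarea formula \eqref{eq:coarea} with the standard $L^1_{\loc}$-lower semicontinuity of the total variation, which is immediate from the definition via approximating locally Lipschitz functions. Assuming $L := \liminf_i \|Du_i\|(\Omega) < \infty$ (else the inequality is trivial), I pass to a subsequence so that $\|Du_i\|(\Omega) \to L$. For each $u_i \in L^1_{\loc}(\Omega)$, coarea gives $\|Du_i\|(\Omega) = \int_\R P(\{u_i > t\}, \Omega)\, dt$. For a.e.\ $t$—precisely those for which $\mu(\{u = t\} \cap \Omega) = 0$, which holds for a.e.\ $t$ by Fubini applied to the graph of $u$—the pointwise convergence $u_i \to u$ yields $\chi_{\{u_i > t\}} \to \chi_{\{u > t\}}$ a.e.\ on $\Omega$, and, these being uniformly bounded by $1$, also in $L^1_{\loc}(\Omega)$ by dominated convergence. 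The standard lower semicontinuity of the perimeter under $L^1_{\loc}$-convergence then gives $P(\{u > t\}, \Omega) \le \liminf_i P(\{u_i > t\}, \Omega)$, and Fatou's lemma in the $t$-variable produces
\[
\int_\R P(\{u > t\}, \Omega)\, dt \;\le\; L.
\]

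If $u \in L^1_{\loc}(\Omega)$, a second application of coarea identifies the left-hand side with $\|Du\|(\Omega)$ and closes the proof. The principal obstacle is therefore to establish this local integrability: otherwise the paper's convention forces $\|Du\|(\Omega) = \infty$, and the desired inequality fails against $L < \infty$. To handle this, I would fix a ball $B = B(x, r)$ with $B(x, \lambda r) \Subset \Omega$ and apply the $\BV$ Poincar\'e inequality \eqref{eq:Poincare for BV} to each $u_i$, obtaining the uniform estimate $\int_B |u_i - (u_i)_B|\, d\mu \le C_P r L$. It then remains to bound the averages $(u_i)_B$ uniformly in $i$. For this, I would combine the Chebyshev bound $\mu(\{y \in B : |u_i(y) - (u_i)_B| > s\}) \le C_P r L / s$ with Egorov's theorem applied to $u_i \to u$ on the finite-measure set $B$: for suitable $M, s$ one can exhibit a point $y \in B$ at which simultaneously $|u_i(y)| \le M + 1$ and $|u_i(y) - (u_i)_B| \le s$, forcing $|(u_i)_B| \le M + 1 + s$ for all large $i$. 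A final application of Fatou's lemma then yields $u \in L^1(B)$, and since $B$ was arbitrary, $u \in L^1_{\loc}(\Omega)$, completing the argument.
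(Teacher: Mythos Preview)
Your argument is correct, but the route differs from the paper's in both of the two main steps. For the final lower-semicontinuity step, the paper does not use the coarea formula at all: once $u\in L^1_{\loc}(\Omega)$ is established, it truncates to $u_M$, observes that $(u_i)_M\to u_M$ in $L^1_{\loc}(\Omega)$ by dominated convergence, applies the standard $L^1_{\loc}$-lower semicontinuity to get $\|Du_M\|(\Omega)\le\liminf_i\|Du_i\|(\Omega)$, and then lets $M\to\infty$ using $u_M\to u$ in $L^1_{\loc}$. Your coarea-plus-Fatou slicing is an equally valid alternative and perhaps more structural. For the local integrability of $u$, the paper avoids your Egorov/Chebyshev ``good point'' argument by splitting into positive and negative parts: it applies the Poincar\'e inequality \eqref{eq:Poincare for BV} to $(u_i)_+$, uses Fatou together with the a.e.\ finiteness of $\lim_i(u_i)_+$ to deduce that $\liminf_i((u_i)_+)_{B}<\infty$ (otherwise the integrand $\liminf_i|(u_i)_+-((u_i)_+)_B|$ would be $+\infty$ a.e.), and then a second application of Fatou gives $u_+\in L^1(B)$; similarly for $u_-$. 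This is slicker than your approach---nonnegativity lets Fatou do all the work without having to pin down a single point where several conditions hold simultaneously---but your argument via Egorov (or, more simply, convergence in measure) is perfectly sound.
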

In particular, if the right-hand side is finite, then $u\in L^1_{\loc}(\Om)$.
\begin{proof}
	We can assume that the right-hand side is finite.
	Let $B(x,r)$ be a ball such that $B(x,\lambda r)\subset\Om$.
	By the Poincar\'e inequality \eqref{eq:Poincare for BV},
	\begin{align*}
	\infty
	>\liminf_{i\to\infty}\Vert Du_i\Vert(B(x,\lambda r))
	&\ge \liminf_{i\to\infty}\Vert D(u_i)_+\Vert(B(x,\lambda r))\\
	&\ge \frac{1}{C_P r}\liminf_{i\to\infty}\int_{B(x,r)}|(u_i)_+ - ((u_i)_+)_{B(x,r)}|\,d\mu\\
	&\ge \frac{1}{C_P r}\int_{B(x,r)}\liminf_{i\to\infty}|(u_i)_+ - ((u_i)_+)_{B(x,r)}|\,d\mu
	\end{align*}
	by Fatou's lemma. Since $\lim_{i\to\infty}(u_i)_+$ exists and is finite a.e., we conclude
	that $\liminf_{i\to\infty}((u_i)_+)_{B(x,r)}$ is finite. By another application of Fatou's lemma, it follows that $(u_+)_{B(x,r)}$ is finite. We conclude that
	$u_+ \in L^1_{\loc}(\Om)$. Similarly we show that $u_-\in L^1_{\loc}(\Om)$.
	In conclusion, $u\in L^1_{\loc}(\Om)$.
	If $M>0$ and $u_M:=\min\{M,\max\{-M,u\}\}$, we have $(u_i)_M\to u_M$ in $L^1_{\loc}(\Om)$
	by Lebesgue's dominated convergence theorem, and so
	\[
	\Vert D u_M\Vert(\Om)\le \liminf_{i\to\infty}\Vert D(u_i)_M\Vert(\Om)\le
	\liminf_{i\to\infty}\Vert D u_i\Vert(\Om).
	\]
	On the other hand, since we now know that $u\in L^1_{\loc}(\Om)$,
	we also know that $u_M\to u$ in $L^1_{\loc}(\Om)$ as $M\to\infty$, and so
	\[
	\Vert D u\Vert(\Om)\le \liminf_{M\to\infty}\Vert Du_M\Vert(\Om).
	\]
	The result follows.
\end{proof}

Now we turn to the proof of the Leibniz rule. As we recall from the introduction,
the Leibniz rule for Newton-Sobolev functions is a standard result also in metric spaces.
We begin by noting that the rule is easy to extend to the case where
\emph{one} of the functions is a $\BV$ function.
\begin{lemma}\label{lem:Leibniz rule for BV consequence}
Let $u\in L^{\infty}(\Om)$ with $\Vert Du\Vert(\Om)<\infty$ and let
$\eta\in N^{1,1}(\Om)\cap L^{\infty}(\Om)$. Then
	\[
	\Vert D(\eta u)\Vert(\Om)\le \int_{\Om}|\eta| \,d\Vert Du\Vert
	+\int_{\Om}|u|g_{\eta}\,d\mu.
	\]
\end{lemma}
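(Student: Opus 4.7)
The plan is to approximate $u$ by locally Lipschitz, truncated functions, apply the standard product rule for upper gradients, and then pass to the limit by combining ordinary lower semicontinuity of the total variation with Proposition \ref{prop:lower semic in quasiopen sets}, which upgrades weak* convergence against continuous test functions to convergence against $1$-quasicontinuous ones.

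By the definition of $\Vert Du\Vert(\Om)$, pick a sequence $(u_i)\subset \liploc(\Om)$ with $u_i\to u$ in $L^1_{\loc}(\Om)$ and $\int_\Om g_{u_i}\,d\mu\to \Vert Du\Vert(\Om)$. Set $M:=\Vert u\Vert_{L^{\infty}(\Om)}$ and truncate each $u_i$ at levels $\pm M$: the truncation is again in $\liploc(\Om)$, its minimal $1$-weak upper gradient is pointwise bounded by $g_{u_i}$, and since $|u|\le M$ a.e. the truncated sequence still converges to $u$ in $L^1_{\loc}(\Om)$. Combining this with the lower semicontinuity of the total variation shows that we still have $\int_\Om g_{u_i}\,d\mu\to \Vert Du\Vert(\Om)$ after truncation, and after passing to a subsequence we may additionally assume $u_i\to u$ a.e. Now $\eta u_i\in N^{1,1}_{\loc}(\Om)$ and the standard product rule for upper gradients gives $g_{\eta u_i}\le |\eta|\,g_{u_i}+|u_i|\,g_{\eta}$ $\mu$-a.e. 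Since $\eta$ is bounded, $\eta u_i\to \eta u$ in $L^1_{\loc}(\Om)$, and by lower semicontinuity (applied to $N^{1,1}_{\loc}$-approximations via density of $\liploc$ in $N^{1,1}$, or directly via Theorem \ref{thm:characterization of total variational}),
\[
\Vert D(\eta u)\Vert(\Om)\le \liminf_{i\to\infty}\int_\Om g_{\eta u_i}\,d\mu\le \liminf_{i\to\infty}\int_\Om |\eta|\,g_{u_i}\,d\mu+\liminf_{i\to\infty}\int_\Om |u_i|\,g_{\eta}\,d\mu.
\]

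For the second integral, $|u_i|\le M$, $u_i\to u$ a.e., and $g_{\eta}\in L^1(\Om)$, so dominated convergence gives $\int_\Om |u_i|\,g_{\eta}\,d\mu\to \int_\Om |u|\,g_{\eta}\,d\mu$. For the first integral, the function $|\eta|$ is bounded, and it is $1$-quasicontinuous on $\Om$ because $\eta\in N^{1,1}(\Om)$ is $1$-quasicontinuous and taking absolute values preserves $1$-quasicontinuity. The open set $\Om$ is trivially $1$-quasiopen, and the sequence $(u_i)$ was chosen so that $\int_\Om g_{u_i}\,d\mu\to \Vert Du\Vert(\Om)$, so the second statement of Proposition \ref{prop:lower semic in quasiopen sets} applies and yields
\[
\lim_{i\to\infty}\int_\Om |\eta|\,g_{u_i}\,d\mu=\int_\Om |\eta|\,d\Vert Du\Vert.
\]
Adding the two limits gives the stated inequality.

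The main obstacle is the first integral: a direct application of lower semicontinuity of $\Vert Du_i\Vert$ against the test function $|\eta|$ would give the inequality in the \emph{wrong} direction, i.e. $\int |\eta|\,d\Vert Du\Vert\le \liminf\int |\eta|\,g_{u_i}\,d\mu$. It is precisely the equality of total masses $\int g_{u_i}\,d\mu\to \Vert Du\Vert(\Om)$ together with the $1$-quasicontinuity of $|\eta|$ (available because $\eta\in N^{1,1}$) that lets Proposition \ref{prop:lower semic in quasiopen sets} promote this one-sided bound to the equality we need, which is exactly why the hypothesis $\eta\in N^{1,1}$ is crucial here rather than merely $\eta\in \BV\cap L^{\infty}$.
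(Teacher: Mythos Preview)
Your proof is correct and essentially identical to the paper's: both approximate $u$ by truncated locally Lipschitz functions, apply the Newton--Sobolev product rule, use dominated convergence for the $\int|u_i|g_\eta\,d\mu$ term, and invoke the second part of Proposition~\ref{prop:lower semic in quasiopen sets} (via the $1$-quasicontinuity of $|\eta|$) for the $\int|\eta|g_{u_i}\,d\mu$ term. The only slip is the displayed splitting $\liminf_i(a_i+b_i)\le \liminf_i a_i+\liminf_i b_i$, which is false in general; since you go on to show that both sequences actually converge, just keep the $\liminf$ on the sum and pass to $\lim a_i+\lim b_i$ at the end, as the paper does.
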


\begin{proof}
By the definition of the total variation, we find a
sequence $(u_i)\subset \liploc(\Om)$ such that $u_i\to u$ in $L_{\loc}^1(\Om)$
and
\[
\lim_{i\to\infty}\int_{\Om}g_{u_i}\,d\mu= \Vert Du\Vert(\Om).
\]
By truncating if necessary, we can assume that
$\Vert u_i\Vert_{L^{\infty}(\Om)}\le \Vert u\Vert_{L^{\infty}(\Om)}$,
and by passing to a subsequence (not relabeled)
we can assume that $u_i\to u$ a.e. in $\Om$.
Also, $\eta u_i\to \eta u$ in $L^1_{\loc}(\Om)$, and so by lower semicontinuity
and the Leibniz rule for Newton-Sobolev functions (see \cite[Theorem 2.15]{BB}),
\begin{align*}
\Vert D(\eta u)\Vert(\Om)
&\le \liminf_{i\to\infty}\int_{\Om}g_{\eta u_i}\,d\mu\\
&\le \liminf_{i\to\infty}\left(\int_{\Om}|\eta|g_{u_i}\,d\mu
+\int_{\Om}|u_i|g_{\eta} \,d\mu\right)\\
&= \int_{\Om}|\eta|\,d\Vert Du\Vert
+\int_{\Om}|u|g_{\eta} \,d\mu
\end{align*}
by the second part of Proposition \ref{prop:lower semic in quasiopen sets}
(recall that the Newton-Sobolev function $\eta$ is quasicontinuous by e.g. 
\cite[Theorem 5.29]{BB})
and by Lebesgue's dominated convergence theorem.
\end{proof}

This first step of proving the $\BV$ Leibniz rule was essentially the same as in \cite{KKST3}.
However, to handle the case where \emph{both} functions are $\BV$ functions
(or even more generally locally integrable functions),
we will rely on the theory of Section \ref{sec:weak convergence} as well as the
following results.

\begin{theorem}[{\cite[Theorem 3.2]{L-SP}}]\label{thm:strict to pointwise convergence}
Let $u_i,u\in \BV(\Omega)$ such that $u_i\to u$ in $L^1(\Omega)$ and $\Vert Du_i\Vert(\Omega)\to \Vert Du\Vert(\Omega)$. Then there exists a subsequence (not relabeled) such that for $\mathcal H$-a.e. $x\in \Omega$,
\[
u^{\wedge}(x)\le\liminf_{i\to\infty}u_i^{\wedge}(x)
\le
\limsup_{i\to\infty}u_i^{\vee}(x)\le u^{\vee}(x).
\]
\end{theorem}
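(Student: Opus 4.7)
My plan is to reduce the assertion to $\mathcal{H}$-a.e.\ pointwise convergence of level sets via the coarea formula, and then upgrade strict convergence of perimeters to pointwise control of measure-theoretic interiors and exteriors using the weak* convergence machinery of Section \ref{sec:weak convergence}. The middle inequality $\liminf_i u_i^\wedge(x)\le\limsup_i u_i^\vee(x)$ is automatic, and by the symmetry $(-u)^\vee=-u^\wedge$ it suffices to prove $\limsup_i u_i^\vee(x)\le u^\vee(x)$ for $\mathcal{H}$-a.e.\ $x$.

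Set $E^t:=\{u>t\}$ and $E_i^t:=\{u_i>t\}$. The coarea formula \eqref{eq:coarea} yields $\Vert Du\Vert(\Omega)=\int_{\R}P(E^t,\Omega)\,dt$ and the analogous identity for each $u_i$. Fubini applied to $\int_\Omega|u_i-u|\,d\mu=\int_\R\Vert\ch_{E_i^t}-\ch_{E^t}\Vert_{L^1(\Omega)}\,dt$ combined with a diagonal extraction gives $\ch_{E_i^t}\to\ch_{E^t}$ in $L^1(\Omega)$ for a.e.\ $t$. Then $L^1$-lower semicontinuity of perimeter forces $P(E^t,\Omega)\le\liminf_i P(E_i^t,\Omega)$ for a.e.\ $t$, and combining with $\Vert Du_i\Vert(\Omega)\to\Vert Du\Vert(\Omega)$, the coarea identity, and Fatou's lemma, equality must hold for a.e.\ $t$:
\[
P(E_i^t,\Omega)\to P(E^t,\Omega).
\]
Call such $t$ \emph{good} and fix a countable dense set $Q\subset\R$ of good values.

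The crux is then, for each $t\in Q$, to upgrade the strict $\BV$-convergence $\ch_{E_i^t}\to\ch_{E^t}$ to the $\mathcal{H}$-a.e.\ pointwise inclusion: off an $\mathcal{H}$-null set $N_t$, if $x\in(E^t)^0$ (measure-theoretic exterior) then $x\in(E_i^t)^0$ for all large $i$. Here I would invoke Proposition \ref{prop:weak convergence with quasisemicontinuous test function} (or its variation-measure analog obtained by repeating its proof with Theorem \ref{thm:lower semic in quasiopen sets} replacing \eqref{eq:upper gradient lsc in quasiopen set}) applied to $\eta:=\ch_{E^t}^\vee$, which is bounded and $1$-quasi upper semicontinuous by Proposition \ref{prop:quasisemicontinuity}. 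Since $\eta\equiv 0$ precisely on $(E^t)^0$, the resulting inequality forbids the perimeter measures $\Vert D\ch_{E_i^t}\Vert$ from accumulating inside $(E^t)^0$; combined with the lower bound $\theta_{E_i^t}\ge\alpha$ from \eqref{eq:def of theta}, the absolute continuity \eqref{eq:absolute continuity of var measure wrt H}, and a density/Vitali argument on $\mathcal{H}$, this gives the claimed pointwise inclusion off some $N_t$. Setting $N:=\bigcup_{t\in Q}N_t$ keeps the exceptional set $\mathcal{H}$-null.

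To conclude, if $x\notin N$ and $\limsup_i u_i^\vee(x)>u^\vee(x)$, choose $t\in Q$ strictly between them. Since $t>u^\vee(x)$ we have $x\in(E^t)^0$, so by the crux $x\in(E_i^t)^0$ eventually, i.e.\ $u_i^\vee(x)\le t$ eventually, contradicting $\limsup_i u_i^\vee(x)>t$. The main obstacle is the density/Vitali step in the crux: translating strict perimeter convergence into pointwise control of measure-theoretic boundaries in a metric space, where no rectifiability is available, is exactly what motivates developing the weak* convergence against quasi upper semicontinuous test functions in Section \ref{sec:weak convergence}.
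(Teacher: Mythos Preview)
First, note that the paper does not prove this theorem at all: it is quoted verbatim from \cite[Theorem~3.2]{L-SP} and used as a black box in the proof of Theorem~\ref{thm:Leibniz rule}. So there is no ``paper's own proof'' to compare against; the question is whether your sketch is a viable independent proof using the tools of Section~\ref{sec:weak convergence}.

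Your coarea reduction is correct and standard: after passing to subsequences one does obtain, for a.e.\ $t$, that $\ch_{E_i^t}\to\ch_{E^t}$ in $L^1(\Omega)$ and $P(E_i^t,\Omega)\to P(E^t,\Omega)$. The symmetry argument and the choice of a countable dense $Q$ are fine. The difficulty is entirely in the ``crux'' paragraph, and there the sketch has a genuine gap.

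Your proposed test function $\eta=\ch_{E^t}^{\vee}$ equals $1$ on $\Omega\setminus(E^t)^0$ and $0$ on $(E^t)^0$. Applying (the variation-measure analogue of) Proposition~\ref{prop:weak convergence with quasisemicontinuous test function} therefore yields
\[
P(E^t,\Omega)=\int_\Omega\eta\,d\Vert D\ch_{E^t}\Vert\ \ge\ \limsup_i\int_\Omega\eta\,d\Vert D\ch_{E_i^t}\Vert=\limsup_i P\bigl(E_i^t,\Omega\setminus(E^t)^0\bigr),
\]
which combined with $P(E_i^t,\Omega)\to P(E^t,\Omega)$ gives only $\liminf_i P(E_i^t,(E^t)^0)\ge 0$ --- a vacuous statement. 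The inequality points the wrong way to conclude that perimeter does not accumulate in $(E^t)^0$; one would need $\eta$ to be quasi \emph{lower} semicontinuous and supported in $(E^t)^0$, but $\ch_{(E^t)^0}=1-\ch_{E^t}^{\vee}$ is quasi lower (not upper) semicontinuous, and Theorem~\ref{thm:lower semic in quasiopen sets} applied to the quasiopen set $(E^t)^0$ again gives only the trivial $0\le\liminf_i P(E_i^t,(E^t)^0)$.

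There is a second, independent gap: even if you could show $P(E_i^t,(E^t)^0)\to 0$ and hence, via Borel--Cantelli and \eqref{eq:def of theta}, that $\mathcal H$-a.e.\ $x\in(E^t)^0$ eventually lies outside $\partial^*E_i^t$, this does not by itself force $x\in(E_i^t)^0$; you must also rule out $x\in(E_i^t)^1$. Pointwise $L^1$-convergence only gives this for $\mu$-a.e.\ $x$, which is far weaker than $\mathcal H$-a.e. The ``density/Vitali argument'' you allude to is exactly the missing substance: one needs a quantitative link (e.g.\ a relative isoperimetric inequality in small balls) tying the event $x\in(E_i^t)^1\cap(E^t)^0$ to perimeter in $B(x,r)$, uniformly in $i$, and then a covering argument. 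None of this is supplied by Section~\ref{sec:weak convergence}. In effect, your ``crux'' is the theorem itself specialised to characteristic functions, and that case already carries the full difficulty; the proof in \cite{L-SP} handles it with different tools.
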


Next we note that Federer's characterization of sets of finite perimeter holds also
in metric spaces.

\begin{theorem}[{\cite[Theorem 1.1]{L-Fedchar}}]\label{thm:characterization}
	Let $\Omega\subset X$ be open, let $E\subset X$ be $\mu$-measurable, and suppose that
	$\mathcal H(\partial^*E \cap \Omega)<\infty$. Then $P(E,\Omega)<\infty$.
\end{theorem}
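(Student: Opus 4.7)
The easy direction, $P(E,\Om)<\infty\Rightarrow\mathcal H(\partial^*E\cap\Om)<\infty$, is immediate from \eqref{eq:def of theta} together with the lower bound $\theta_E\ge\alpha>0$. The substance of the theorem is the converse, which is what I will address.

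My plan is an approximation argument driven by a good covering of $\partial^*E$. Fix $\Om'\Subset\Om$; by inner regularity of the Borel measure $\Vert D\chi_E\Vert$ (Theorem \ref{thm:variation measure property}) it suffices to establish $P(E,\Om')\le CM$ with $M:=\mathcal H(\partial^*E\cap\Om)$. For each $j\in\N$, using the definition of $\mathcal H_{1/j}$, pick a countable cover $\{B(x_i^{(j)},r_i^{(j)})\}_i$ of $\partial^*E\cap\Om'$ with $r_i^{(j)}\le 1/j$ and
\[
\sum_i\frac{\mu(B(x_i^{(j)},r_i^{(j)}))}{r_i^{(j)}}\le 2M.
\]
A $5B$-covering refinement makes the $B_i^{(j)}$ pairwise disjoint while the $5$-dilations still cover. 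I would then construct $u_j\in\liploc(\Om')$ with $u_j\to\chi_E$ in $L^1(\Om')$ via a discrete convolution of $\chi_E$ at local scale $r_i^{(j)}$ inside each $\lambda B_i^{(j)}$ (with $\lambda$ from the Poincar\'e inequality) and at a much coarser scale outside. A standard Poincar\'e-type oscillation estimate then yields
\[
g_{u_j}\le C\sum_i\frac{1}{r_i^{(j)}}\ch_{\lambda B_i^{(j)}},
\]
so that $\int_{\Om'}g_{u_j}\,d\mu\le C'M$ by doubling. Combining this with \eqref{eq:Sobolev subclass BV} and the lower semicontinuity of the total variation with respect to $L^1_{\loc}$-convergence, one concludes $P(E,\Om')\le C'M$.

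The main obstacle, and the heart of the proof in \cite{L-Fedchar}, is justifying that $g_{u_j}$ really is essentially supported in $\bigcup_i\lambda B_i^{(j)}$. Outside that union every point lies in $X\setminus\partial^*E$ and hence has density $0$ or $1$ with respect to $E$; however, passing from this pointwise dichotomy to a uniform constancy of mean values of $\chi_E$ on adjacent Whitney balls — so that the discrete convolution genuinely carries a small upper gradient there — requires a delicate combinatorial/geometric matching. One must, via Lebesgue-Vitali differentiation and doubling, argue that adjacent Whitney balls lying entirely on the same "side" of $\partial^*E$ see the same phase, and handle the overlap regions where dilated covering balls force localized errors. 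This is what makes the theorem substantially nontrivial in the metric setting, where no rectifiability of $\partial^*E$ is available as a fallback.
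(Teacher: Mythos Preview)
The paper does not prove this theorem at all: it is quoted verbatim as \cite[Theorem 1.1]{L-Fedchar} and used as a black box in the proof of Theorem~\ref{thm:characterization of BV0}. So there is no ``paper's own proof'' to compare against.

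As for your sketch itself, you correctly isolate the genuine obstruction --- controlling $g_{u_j}$ away from the cover of $\partial^*E$ --- and you are honest that this is where the real work lies. But the sketch as written does not yet constitute a proof outline one could complete by filling in routine details. Two concrete gaps: (i) your construction ``discrete convolution at local scale $r_i^{(j)}$ inside each $\lambda B_i^{(j)}$ and at a much coarser scale outside'' is not actually a construction; a discrete convolution is tied to a single partition of unity at a single Whitney-type scale, and gluing two scales together creates transition zones whose upper-gradient contribution you have not accounted for. (ii) More seriously, the claim that outside $\bigcup_i \lambda B_i^{(j)}$ adjacent Whitney balls ``see the same phase'' of $E$ is exactly the theorem in disguise: pointwise density $0$ or $1$ at each $x\notin\partial^*E$ gives no uniformity in $r$, so two neighboring balls of comparable radius can still have mean values of $\chi_E$ near $0$ and near $1$ respectively, producing an upper-gradient contribution of order $1/r$ there. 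Ruling this out is not a ``delicate matching'' on top of your argument --- it \emph{is} the argument, and it is what \cite{L-Fedchar} is devoted to.
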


Recall the definition of the class $\BV_0(D,\Om)$ from \eqref{eq:definition of BV zero}.
The following result and its proof are similar to \cite[Theorem 6.1]{LaSh2}, which
was originally based on \cite[Theorem 1.1]{KKST}.

\begin{theorem}\label{thm:characterization of BV0}
Let $W\subset \Om\subset X$ be open sets and let $u\in\BV(W)$ such that
\begin{equation}\label{eq:Lebesgue type boundary condition}
\lim_{r\to 0}\frac{1}{\mu(B(x,r))}\int_{B(x,r)\cap W}|u|\,d\mu=0
\end{equation}
for $\mathcal H$-a.e. $x\in \Om\cap \partial W$. Then $u\in\BV_0(W,\Om)$.
\end{theorem}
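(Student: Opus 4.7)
The plan is to extend $u$ by zero to a function on $\Om$ and verify that it lies in $\BV(\Om)$ with vanishing approximate limits off $W$. Concretely, set $\tilde u:=u$ on $W$ and $\tilde u:=0$ on $\Om\setminus W$; since $u\in L^1(W)$, $\tilde u\in L^1(\Om)$, and in view of \eqref{eq:definition of BV zero} the theorem reduces to establishing (i) $\Vert D\tilde u\Vert(\Om)<\infty$ and (ii) $\tilde u^{\wedge}(x)=\tilde u^{\vee}(x)=0$ for $\mathcal H$-a.e.\ $x\in\Om\setminus W$.

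For (i) I would argue via coarea and Federer. Writing $E_t:=\{\tilde u>t\}\cap\Om$, I would examine $\partial^* E_t\cap\Om$ piece by piece. Since $W$ and $\Om\setminus\overline{W}$ are open, for small $r$ any $x$ in the former has $B(x,r)\cap E_t=B(x,r)\cap\{u>t\}$, while any $x$ in the latter has $\tilde u\equiv0$ on $B(x,r)$, so $x\notin\partial^* E_t$. For $x\in\Om\cap\partial W$ satisfying the hypothesis \eqref{eq:Lebesgue type boundary condition}, Chebyshev's inequality yields, when $t>0$,
\[
\frac{\mu(B(x,r)\cap E_t)}{\mu(B(x,r))}\le\frac{1}{t\,\mu(B(x,r))}\int_{B(x,r)\cap W}|u|\,d\mu\to0,
\]
and when $t<0$, using $|u|\ge|t|$ on $\{u\le t\}$, the density of $\Om\setminus E_t=\{u\le t\}\cap W$ at $x$ tends to $0$; either way $x\notin\partial^* E_t$. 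Letting $N_0$ denote the $\mathcal H$-null set where \eqref{eq:Lebesgue type boundary condition} fails, this gives the inclusion
\[
\partial^* E_t\cap\Om\subset(\partial^*\{u>t\}\cap W)\cup N_0\qquad\text{for every }t\ne 0.
\]

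Next, \eqref{eq:def of theta} applied in $W$ bounds $\mathcal H(\partial^*\{u>t\}\cap W)\le\alpha^{-1}P(\{u>t\},W)$, which is finite for a.e.\ $t$ by \eqref{eq:coarea}. Hence $\mathcal H(\partial^* E_t\cap\Om)<\infty$ for a.e.\ $t$, and Theorem~\ref{thm:characterization} yields $P(E_t,\Om)<\infty$; a second application of \eqref{eq:def of theta}, now in $\Om$, then gives $P(E_t,\Om)\le C_d\mathcal H(\partial^* E_t\cap\Om)\le (C_d/\alpha)P(\{u>t\},W)$. Integrating against $t$ via \eqref{eq:coarea} produces the desired bound $\Vert D\tilde u\Vert(\Om)\le(C_d/\alpha)\Vert Du\Vert(W)<\infty$. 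For (ii), the very same Chebyshev estimate applied at $x\in\Om\cap\partial W$ shows $\mu(B(x,r)\cap\{\tilde u>t\})/\mu(B(x,r))\to0$ for every $t>0$ and $\mu(B(x,r)\cap\{\tilde u<t\})/\mu(B(x,r))\to0$ for every $t<0$, forcing $\tilde u^\wedge(x)=\tilde u^\vee(x)=0$; on $\Om\setminus\overline W$ both limits vanish trivially since $\tilde u\equiv0$ locally.

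The main obstacle is the boundary-term analysis leading to the inclusion for $\partial^* E_t$: one must see that the Lebesgue-type hypothesis simultaneously kills densities at both positive and negative levels $t$, and then that this $\mathcal H$-finiteness of the measure-theoretic boundary can be promoted to finite perimeter through Federer's characterization. Once this two-step passage is in place, the rest is a routine coarea and Chebyshev argument.
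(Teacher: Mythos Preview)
Your argument is correct and follows essentially the same route as the paper: zero extension, Chebyshev at boundary points to kill the measure-theoretic boundary off $W$ for $t\neq 0$, the bound $\mathcal H(\partial^*\{u>t\}\cap W)\le\alpha^{-1}P(\{u>t\},W)$ from \eqref{eq:def of theta}, Federer's characterization, and coarea. The only minor deviation is that the paper, after obtaining $P(E_t,\Om)<\infty$, uses \eqref{eq:def of theta} once more to conclude $P(E_t,\Om\setminus W)=0$ (since $\mathcal H(\partial^*E_t\cap\Om\setminus W)=0$), yielding the exact identity $\Vert D\tilde u\Vert(\Om)=\Vert Du\Vert(W)$ rather than your inequality with the constant $C_d/\alpha$; either conclusion suffices for the theorem.
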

\begin{proof}
Define the zero extension $u:=0$ (a.e.) on $\Om\setminus W$.
Fix $x\in\Om\cap \partial W$ such 
that \eqref{eq:Lebesgue type boundary condition} holds.
If $t<0$, then
\begin{align*}
\limsup_{r\to 0}\frac{\mu(B(x,r)\setminus \{u>t\})}{\mu(B(x,r))}
&=\limsup_{r\to 0}\frac{\mu(B(x,r)\cap W\setminus \{u>t\})}{\mu(B(x,r))}\\
  &\le \limsup_{r\to 0} \frac{1}{|t|\mu(B(x,r))}\int_{B(x,r)\cap W}|u|\,d\mu=0.
\end{align*}
If $t>0$, then
\begin{align*}
\limsup_{r\to 0}\frac{\mu(B(x,r)\cap \{u>t\})}{\mu(B(x,r))}
&=\limsup_{r\to 0}\frac{\mu(B(x,r)\cap W\cap \{u>t\})}{\mu(B(x,r))}\\
  &\le \limsup_{r\to 0} \frac{1}{|t|\mu(B(x,r))}\int_{B(x,r)\cap W}|u|\,d\mu=0.
\end{align*}
In both cases it follows that $x\notin \partial^*\{u>t\}$.
Clearly this is true also for
every $x\in \Om\setminus \overline{W}$.
In conclusion, $\mathcal H(\partial^*\{u>t\}\cap \Om\setminus W)=0$
for every $t\neq 0$.
By the coarea formula \eqref{eq:coarea} we know that $P(\{u>t\},W)<\infty$
for a.e. $t\in\R$. For such $t\neq 0$, by \eqref{eq:def of theta} we have
\[
\mathcal H(\partial^{*}\{u>t\}\cap \Om)=\mathcal H(\partial^{*}\{u>t\}\cap W)\le 
\alpha^{-1} P(\{u>t\},W)<\infty.
\]
By Theorem \ref{thm:characterization} it follows that $P(\{u>t\},\Om)<\infty$.
Since $\mathcal H(\partial^*\{u>t\}\cap \Om\setminus W)=0$,
by \eqref{eq:def of theta} we have $P(\{u>t\},\Om\setminus W)=0$.
Thus
\[
\int_{-\infty}^\infty P(\{u>t\},\Om)\, dt=\int_{-\infty}^\infty P(\{u>t\},W)\, dt
=\Vert Du\Vert(W),
\]
where the last equality follows from the coarea formula \eqref{eq:coarea}.
Since now $\int_{-\infty}^\infty P(\{u>t\},\Om)\, dt<\infty$, by another
application of the coarea formula we find that $\Vert Du\Vert(\Om)<\infty$.
From \eqref{eq:Lebesgue type boundary condition} we easily get
$u^{\wedge}(x)=u^{\vee}(x)=0$ for $\mathcal H$-a.e. $x\in\Om\setminus W$.
In conclusion, $u\in\BV_0(W,\Om)$.
\end{proof}

Now we prove our main result, which we restate here.

\begin{theorem}\label{thm:Leibniz rule}
Let $\Om\subset X$ be an open set and let $u,v\in L^1_{\loc}(\Om)$.
Then
\begin{equation}\label{eq:Leibniz rule for 2 functions}
\Vert D(uv)\Vert(\Om)\le \int_{\Om}|u|^{\vee}\,d\Vert Dv\Vert
+\int_{\Om}|v|^{\vee}\,d\Vert Du\Vert.
\end{equation}
\end{theorem}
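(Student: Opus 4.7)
The plan is to approximate one of the functions---say $v$---by Lipschitz functions, apply the one-sided Leibniz rule Lemma \ref{lem:Leibniz rule for BV consequence}, and then pass to the limit using the sharp tools from Section \ref{sec:weak convergence} together with Theorem \ref{thm:strict to pointwise convergence}, in such a way that no multiplicative constant appears.

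First I would reduce to the case where $u$ and $v$ are bounded and lie in $\BV(\Om)$ with finite total variation. If the right-hand side of \eqref{eq:Leibniz rule for 2 functions} is infinite there is nothing to prove, so assume it is finite. Truncating $u,v$ at level $\pm M$ yields bounded $u_M, v_M$ with $|u_M|^{\vee}\le |u|^{\vee}$ and $\Vert Du_M\Vert \le \Vert Du\Vert$ (via the coarea formula \eqref{eq:coarea}), so the truncated right-hand side is no larger than the original. Since $u_M v_M\to uv$ a.e. as $M\to\infty$, Proposition \ref{prop:lower semicontinuity pointwise} delivers the result for $uv$ once it is known for $u_M v_M$. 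A further reduction to the case $\Vert Du\Vert(\Om), \Vert Dv\Vert(\Om) < \infty$ proceeds by exhausting $\Om$ with open subregions on which both variations are finite; the subtlety that $\Vert Du\Vert$ or $\Vert Dv\Vert$ may be locally infinite even when the right-hand side is finite (for instance if $|u|^{\vee}$ vanishes on the support of $\Vert Dv\Vert$) is handled using the extension machinery of Theorem \ref{thm:characterization of BV0}, combined with the Federer-type characterization of Theorem \ref{thm:characterization}.

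In the reduced setting, choose Lipschitz approximations $v_i\to v$ in $L^1(\Om)$ with $\int_{\Om} g_{v_i}\,d\mu \to \Vert Dv\Vert(\Om)$ and $\|v_i\|_{L^{\infty}(\Om)}\le \|v\|_{L^{\infty}(\Om)}$. Lemma \ref{lem:Leibniz rule for BV consequence} gives
\[
\Vert D(uv_i)\Vert(\Om)\le \int_{\Om} |v_i|\,d\Vert Du\Vert + \int_{\Om} |u|\,g_{v_i}\,d\mu.
\]
Since $u$ is bounded and $v_i\to v$ in $L^1(\Om)$, also $uv_i\to uv$ in $L^1(\Om)$, and lower semicontinuity gives $\Vert D(uv)\Vert(\Om)\le \liminf_i \Vert D(uv_i)\Vert(\Om)$. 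The two limit passages on the right invoke two distinct sharp tools. For the first integral, Theorem \ref{thm:strict to pointwise convergence} applied along a subsequence to $v_i\to v$, together with $v_i^{\wedge}=v_i^{\vee}=v_i$ (Lipschitz), yields $\limsup_i |v_i(x)|\le |v|^{\vee}(x)$ for $\mathcal H$-a.e. $x\in\Om$, hence $\Vert Du\Vert$-a.e. by \eqref{eq:absolute continuity of var measure wrt H}; since the $|v_i|$ are uniformly bounded and $\Vert Du\Vert(\Om)<\infty$, reverse Fatou gives $\limsup_i \int_{\Om} |v_i|\,d\Vert Du\Vert \le \int_{\Om} |v|^{\vee}\,d\Vert Du\Vert$. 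For the second integral, $|u|^{\vee}$ is bounded and $1$-quasi upper semicontinuous by Proposition \ref{prop:quasisemicontinuity} and agrees $\mu$-a.e. with $|u|$, so Proposition \ref{prop:weak convergence with quasisemicontinuous test function} gives $\limsup_i \int_{\Om} |u|\,g_{v_i}\,d\mu = \limsup_i \int_{\Om} |u|^{\vee}\,g_{v_i}\,d\mu \le \int_{\Om} |u|^{\vee}\,d\Vert Dv\Vert$. Combining the estimates produces \eqref{eq:Leibniz rule for 2 functions}.

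The main obstacle is executing this limit passage sharply; the discrete-convolution technique behind Proposition \ref{prop:KKST Leibniz rule} unavoidably introduces the multiplicative constant $C\ge 1$. Our plan eliminates it by combining Proposition \ref{prop:weak convergence with quasisemicontinuous test function}, which upgrades weak* convergence against continuous test functions to convergence against bounded $1$-quasi upper semicontinuous ones---exactly the regularity of $|u|^{\vee}$---with the sharp pointwise control on the approximants furnished by Theorem \ref{thm:strict to pointwise convergence}. A secondary difficulty, handled in the initial reduction, is producing a $\BV_{\loc}$ structure on $\Om$ starting only from $u,v\in L^1_{\loc}(\Om)$; this is where Theorems \ref{thm:characterization} and \ref{thm:characterization of BV0} enter.
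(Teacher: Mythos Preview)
Your proposal is correct and follows essentially the same strategy as the paper's proof: apply Lemma \ref{lem:Leibniz rule for BV consequence} with one function approximated by Lipschitz functions, then control the two resulting integrals sharply via Theorem \ref{thm:strict to pointwise convergence} (pointwise control $\limsup_i |v_i|\le |v|^{\vee}$ $\Vert Du\Vert$-a.e.) and Proposition \ref{prop:weak convergence with quasisemicontinuous test function} (with the bounded $1$-quasi upper semicontinuous test function $|u|^{\vee}$), and handle the reductions using truncation, Proposition \ref{prop:lower semicontinuity pointwise}, and Theorem \ref{thm:characterization of BV0}. The only cosmetic differences are that the paper approximates $u$ rather than $v$, performs the reductions in a different order (bounded $\Om$, then drop finite variation, then drop bounded $\Om$, then drop $L^{\infty}$), and makes explicit the intermediate assumption that $\Om$ is bounded so that the approximants lie in $N^{1,1}(\Om)$ and converge in $L^1(\Om)$ as required by Theorem \ref{thm:strict to pointwise convergence}; you should add this step, but otherwise your outline matches the paper.
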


\begin{remark}\label{rmk:Leibniz rule}
Note that since $|u|^{\vee},|v|^{\vee}$ are Borel functions
and $\Vert Du\Vert,\Vert Dv\Vert$ are Borel measures by Theorem
\ref{thm:variation measure property}, the integrals are always well defined.

Given functions $u,v\in L^1_{\loc}(\Om)$, it is of course not always true that
$uv\in L^1_{\loc}(\Om)$, but implicit in the theorem is the fact that if the right-hand
side is finite, then necessarily $uv\in L^1_{\loc}(\Om)$.
Suppose this is the case.
Theorem \ref{thm:variation measure property} implies that
$\Vert D(uv)\Vert$, $\Vert Dv\Vert$, and $\Vert Du\Vert$
are all Borel measures on $\Om$, and since $|u|^{\vee}$ and $|v|^{\vee}$ are
Borel functions, it is a standard result (see e.g. \cite[Theorem 1.29]{Rud})
that $|u|^{\vee}\,d\Vert Dv\Vert$ and $|v|^{\vee}\,d\Vert Du\Vert$
are Borel measures on $\Om$.
They are also of finite mass (note that $\Vert Du\Vert$ and $\Vert Dv\Vert$
themselves might not be even locally finite), and thus by e.g. \cite[Proposition 1.43]{AFP}
we know that the measure of Borel sets can be approximated from the outside
by open sets.
Inequality \eqref{eq:Leibniz rule for 2 functions} holds of course
also with $\Omega$ replaced by any open set $W\subset \Om$,
and then
\[
d\Vert D(uv)\Vert \le |u|^{\vee}\,d\Vert Dv\Vert
+|v|^{\vee}\,d\Vert Du\Vert
\]
as Borel measures on $\Om$.
\end{remark}

\begin{proof}[Proof of Theorem \ref{thm:Leibniz rule}]
First assume that $\Om$ is bounded, that $u,v\in L^{\infty}(\Om)$, and that
$\Vert Du\Vert(\Om)<\infty$ and $\Vert Dv\Vert(\Om)<\infty$.
Take a sequence $(u_i)\subset \liploc(\Om)$
such that $u_i\to u$ in $L_{\loc}^1(\Om)$ and
\[
\lim_{i\to\infty}\int_{\Om}g_{u_i}\,d\mu= \Vert Du\Vert(\Om).
\]
We can assume that $\Vert u_i\Vert_{L^{\infty}(\Om)}\le \Vert u\Vert_{L^{\infty}(\Om)}$
for all $i\in\N$.
Under our assumptions, we can in fact also assume that
$(u_i)\subset N^{1,1}(\Om)$ with
$u_i\to u$ in $L^1(\Om)$.
By \eqref{eq:Sobolev subclass BV} and by the lower semicontinuity of the total variation,
it follows that also
$\lim_{i\to\infty}\Vert Du_i\Vert(\Om)=\Vert Du\Vert(\Om)$.
By Lemma \ref{lem:Leibniz rule for BV consequence} we have for all $i\in\N$
\begin{equation}\label{eq:estimate for D ui v}
\begin{split}
\Vert D(u_i v)\Vert(\Om)
\le \int_{\Om}|u_i|\,d\Vert Dv\Vert+\int_{\Om}|v|g_{u_i}\,d\mu
=\int_{\Om}|u_i|\,d\Vert Dv\Vert+\int_{\Om}|v|^{\vee}g_{u_i}\,d\mu.
\end{split}
\end{equation}
We pass to a subsequence of $(u_i)$ (not relabeled) for which the conclusion of
Theorem \ref{thm:strict to pointwise convergence} holds.
Note that now also for $\mathcal H$-a.e. $x\in\Om$,
\begin{equation}\label{eq:pointwise convergence for minus case}
\limsup_{i\to\infty}(-u_i)^{\vee}(x)=\limsup_{i\to\infty}-u_i^{\wedge}(x)
=-\liminf_{i\to\infty}u_i^{\wedge}(x)\le -u^{\wedge}(x)
=(-u)^{\vee}(x).
\end{equation}
Since the $u_i$ are continuous functions, of course $u_i=u_i^{\wedge}=u_i^{\vee}$.
Now by
Theorem \ref{thm:strict to pointwise convergence} we have for $\mathcal H$-a.e. $x\in \Om$, and thus also for $\Vert Dv\Vert$-a.e. $x\in \Om$
(recall \eqref{eq:absolute continuity of var measure wrt H})
\[
\limsup_{i\to\infty}u_i(x)=\limsup_{i\to\infty}u_i^{\vee}(x)
\le u^{\vee}(x)\le |u|^{\vee}(x)
\]
and
\[
\limsup_{i\to\infty}(-u_i)(x)=\limsup_{i\to\infty}(-u_i)^{\vee}(x)
\overset{\eqref{eq:pointwise convergence for minus case}}{\le} (-u)^{\vee}(x)\le |u|^{\vee}(x),
\]
so that $\limsup_{i\to\infty}|u_i|(x)\le |u|^{\vee}(x)$.
Recall that $|u|^{\vee}$ is a Borel function and thus $\Vert Dv\Vert$-measurable.
Now we have by Fatou's lemma
\[
\limsup_{i\to\infty}\int_{\Om}|u_i|\,d\Vert Dv\Vert
\le \int_{\Om}\limsup_{i\to\infty}|u_i|\,d\Vert Dv\Vert
\le \int_{\Om}|u|^{\vee}\,d\Vert Dv\Vert.
\]
Since $\Vert Dv\Vert(\Om)<\infty$, clearly also $\Vert D|v|\Vert(\Om)<\infty$, and so
by Proposition \ref{prop:quasisemicontinuity}, $|v|^{\vee}$ is a bounded
$1$-quasi upper semicontinuous function on $\Om$.
Thus we have by Proposition \ref{prop:weak convergence with quasisemicontinuous test function}
\[
\limsup_{i\to\infty}\int_{\Om}|v|^{\vee}g_{u_i}\,d\mu
\le \int_{\Om}|v|^{\vee}\,d\Vert Du\Vert.
\]
Since $u_iv\to uv$ in $L^1(\Om)$, by lower semicontinuity
and by \eqref{eq:estimate for D ui v} we now have
\[
\Vert D(u v)\Vert(\Om)\le\liminf_{i\to\infty}\Vert D(u_i v)\Vert(\Om)
\le \int_{\Om}|u|^{\vee}\,d\Vert Dv\Vert+ \int_{\Om}|v|^{\vee}\,d\Vert Du\Vert,
\]
which is the desired result.

Now we drop the assumption that $\Vert Du\Vert(\Om)<\infty$
and $\Vert Dv\Vert(\Om)<\infty$.
We can assume that the right-hand side of \eqref{eq:Leibniz rule for 2 functions}
is finite. Let
\[
A_k:=\{x\in\Om:\,|u|^{\vee}(x)>1/k\textrm{ and }|v|^{\vee}(x)>1/k\},\quad k\in\N.
\]
Then necessarily $\Vert Du\Vert(A_k)<\infty$ and $\Vert Dv\Vert(A_k)<\infty$,
and so for some open set $W_k$
with $A_k\subset W_k\subset \Om$ we have $\Vert Du\Vert(W_k)<\infty$
and $\Vert Dv\Vert(W_k)<\infty$.
Then the theorem holds in $W_k$, that is,
\[
\Vert D(u v)\Vert(W_k)\le
\int_{W_k}|u|^{\vee}\,d\Vert Dv\Vert+ \int_{W_k}|v|^{\vee}\,d\Vert Du\Vert
\le
\int_{\Om}|u|^{\vee}\,d\Vert Dv\Vert+ \int_{\Om}|v|^{\vee}\,d\Vert Du\Vert.
\]
Letting $k\to\infty$, by Theorem \ref{thm:variation measure property} we get
for $W:=\bigcup_{k=1}^{\infty}W_k$
\begin{equation}\label{eq:D uv in W}
\Vert D(u v)\Vert(W)
\le \int_{\Om}|u|^{\vee}\,d\Vert Dv\Vert
+\int_{\Om}|v|^{\vee}\,d\Vert Du\Vert.
\end{equation}
Also, $(uv)|_W\in L^1(W)$ since $W$ is bounded, and so $(uv)|_W\in\BV(W)$.
Since $W\supset \{|u|^{\vee}>0\}\cap \{|v|^{\vee}>0\}$
and since $u,v\in L^{\infty}(\Om)$, we have
\[
\lim_{r\to 0}\frac{1}{\mu(B(x,r))}\int_{B(x,r)\cap W}|uv|\,d\mu=0
\]
for all $x\in \partial W\cap \Om$.
By Theorem \ref{thm:characterization of BV0} we find that $(uv)|_W\in\BV_0(W,\Om)$.
We have $uv=0$ a.e. on $\Om\setminus W$
by Lebesgue's differentiation theorem
(see e.g. \cite[Chapter 1]{Hei}).
Thus by \eqref{eq:energy of BV zero class} and \eqref{eq:D uv in W},
\[
\Vert D(uv)\Vert(\Om)=\Vert D(uv)\Vert(W)\le \int_{\Om}|u|^{\vee}\,d\Vert Dv\Vert
+\int_{\Om}|v|^{\vee}\,d\Vert Du\Vert.
\]

Next we drop the assumption that $\Om$ is bounded. For a fixed point $x\in X$, we have
by Theorem \ref{thm:variation measure property}
\begin{align*}
\Vert D(u v)\Vert(\Om)
&=\lim_{R\to\infty}\Vert D(u v)\Vert(\Om\cap B(x,R))\\
&\le \limsup_{R\to\infty}\left(\int_{\Om\cap B(x,R)}|u|^{\vee}\,d\Vert Dv\Vert
+\int_{\Om\cap B(x,R)}|v|^{\vee}\,d\Vert Du\Vert\right)\\
&\le \int_{\Om}|u|^{\vee}\,d\Vert Dv\Vert
+\int_{\Om}|v|^{\vee}\,d\Vert Du\Vert.
\end{align*}

Finally we drop the assumption $u,v\in L^{\infty}(\Om)$. Let
\[
u_M:=\min\{M,\max\{-M,u\}\},\quad M>0.
\]
By the above, we have
\begin{align*}
\Vert D(u_M v_M)\Vert(\Om)
&\le \int_{\Om}|u_M|^{\vee}\,d\Vert Dv_M\Vert
+\int_{\Om}|v_M|^{\vee}\,d\Vert Du_M\Vert\\
&\le \int_{\Om}|u|^{\vee}\,d\Vert Dv\Vert
+\int_{\Om}|v|^{\vee}\,d\Vert Du\Vert.
\end{align*}
Now we can use the lower semicontinuity of Proposition \ref{prop:lower semicontinuity pointwise} to get
\[
\Vert D(u v)\Vert(\Om)\le \liminf_{M\to\infty}\Vert D(u_M v_M)\Vert(\Om)
\le \int_{\Om}|u|^{\vee}\,d\Vert Dv\Vert
+\int_{\Om}|v|^{\vee}\,d\Vert Du\Vert.
\]
\end{proof}

\begin{example}\label{ex:pointwise representatives}
Recall that in Euclidean spaces we have the Leibniz rule \eqref{eq:Euclidean Leibniz rule},
which for nonnegative $u,v$ yields the scalar version
\[
d\Vert D(uv)\Vert \le \overline{u} \,d\Vert Dv\Vert+\overline{v}\,d \Vert Du\Vert.
\]
Here $\overline{u}(x):=\limsup_{r\to 0}\vint{B(x,r)}u\,d\mathcal L^n$,
where $\mathcal L^n$ is the $n$-dimensional Lebesgue measure.
In metric spaces this version
of the Leibniz rule does not hold;
in \cite[Example 4.3]{KKST3} (only in the arxiv version) the following counterexample
was given: equip $\R^2$ with the weighted Lebesgue measure $d\mu:=w\,d\mathcal L^2$,
where $w:=2-\ch_{B(0,1)}$ and the origin $(0,0)$ is denoted by $0$.
Let $u:=v:=\ch_{B(0,1)}$. Then $\overline{u}=\overline{v}=1/3$
on $\partial B(0,1)$, and it follows that
\[
d\Vert D(uv)\Vert=d\Vert Du\Vert>\frac 23 d\Vert Du\Vert
=\overline{v}\,d\Vert Du\Vert+\overline{u}\,d\Vert Dv\Vert.
\] 
On the other hand, sometimes one also defines $\overline{u}:=(u^{\wedge}+u^{\vee})/2$. With this definition we would have $\overline{u}=\overline{v}=1/2$
on $\partial B(0,1)$ and then
\[
d\Vert D(uv)\Vert=\overline{v}\,d\Vert Du\Vert+\overline{u}\,d\Vert Dv\Vert.
\]
Thus with this definition of the representative $\overline{u}$,
we seem to need a different type of counterexample,
which we construct as follows.
Consider the space
\[
X:=\{x=(x_1,x_2) \in \R^2:\,x_1=0\text{ or }x_2=0\}
\]
consisting of the two coordinate axes.
Equip this space with the Euclidean metric inherited from $\R^2$, and let $\mu$ be the 1-dimensional Hausdorff measure. 
It is straightforward to check that this measure
is doubling and supports a $(1,1)$-Poincar\'e inequality.
Let
\[
u:=\ch_{X}-\ch_{\{x_2>0\}}\in \BV_{\loc}(X)
\]
and
\[
v:=\ch_{X}-\ch_{\{x_1>0\}}\in \BV_{\loc}(X).
\]
For brevity, denote the origin $(0,0)$ by $0$.
It is straightforward to check that
\[
\Vert Du\Vert(X)=\Vert Du\Vert(0)=\Vert Dv\Vert(X)=\Vert Dv\Vert(0)=1,
\]
and that
$\Vert D(uv)\Vert(X)=\Vert D(uv)\Vert(0)\le 2$.
To see that in fact equality holds, take a sequence
$(u_i)\subset \liploc(X)$ such that
$u_i\to uv$ in $L^1_{\loc}(X)$.
Passing to a subsequence (not relabeled) we have also $u_i\to uv$ a.e. in $X$.
Thus we find $0<t<1$ such that for the points $x_1:=(t,0)$, $x_2:=(0,t)$,
$x_3:=(-t,0)$, and $x_4:=(0,-t)$ we have
$u_i(x_1)\to 0$, $u_i(x_2)\to 0$, $u_i(x_3)\to 1$, and $u_i(x_4)\to 1$
as $i\to \infty$.
Note that in this one-dimensional setting, each pair $(u_i,g_{u_i})$
satisfies the upper gradient inequality for \emph{every} curve in the space.
Let $\eps>0$. Now
\begin{align*}
\int_{B(0,1)}g_{u_i}\,d\mu
&\ge |u_i(x_1)-u_i(0)|+|u_i(x_2)-u_i(0)|+|u_i(x_3)-u_i(0)|+|u_i(x_4)-u_i(0)|\\
&\ge 2|u_i(0)|+2|1-u_i(0)|-\eps\quad\textrm{for large }i\in\N\\
&\ge 2-\eps.
\end{align*}
Since $\eps>0$ was arbitrary, it follows that $\Vert D(uv)\Vert(X)\ge 2$
and so in fact equality holds.
 
In conclusion,
\[
\Vert D(uv)\Vert(0)=2>1=\tfrac 12 \cdot 1+\tfrac 12 \cdot 1
=\overline{u}(0) \Vert Dv\Vert(0)+\overline{v}(0)\Vert Du\Vert(0).
\]
Here we have in fact exactly
\[
d\Vert D(uv)\Vert
=u^{\vee} \,d\Vert Dv\Vert+v^{\vee}\,d\Vert Du\Vert,
\]
demonstrating in this respect the sharpness of our Leibniz rule.
\end{example}

\begin{remark}
Consider inequality \eqref{eq:estimate for D ui v} in the proof of
Theorem \ref{thm:Leibniz rule},
\[
\Vert D(u_i v)\Vert(\Om)
\le \int_{\Om}|u_i|\,d\Vert Dv\Vert+\int_{\Om}|v|^{\vee}g_{u_i}\,d\mu.
\]
In the proof of the Leibniz rule given in \cite{KKST3}
(recall Proposition \ref{prop:KKST Leibniz rule}),
the functions $u_i$ were taken to be discrete convolution
approximations of $u$, because such approximations and their upper
gradients can be described by explicit formulas which
enables analysis of limiting behavior. However,
this produces the constant $C\ge 1$ in the end result.
We are able to avoid this constant by exploiting the convenient
way in which two BV functions ``pair up'' on the right-hand side
of \eqref{eq:estimate for D ui v}: the pointwise convergence of
$(u_i)$ is $\Vert Dv\Vert$-almost everywhere, and the weak*
convergence of $g_{u_i}\,d\mu$ is in a strong enough topology that
$|v|^{\vee}$ can act as a test function.
\end{remark}

\noindent Address:\\

\noindent \noindent Institut f\"ur Mathematik\\
Universit\"at Augsburg\\
Universit\"atsstr. 14\\
86159 Augsburg, Germany\\
E-mail: {\tt panu.lahti@math.uni-augsburg.de}

\end{document}